\newcommand{\set}[1]{\left\{{#1}\right\}}
\newcommand\setsuchas[2]{\left\{\,{#1}\,\vrule\,{#2}\,\right\}}
\newcommand{\CC}{\mathbb{C}}
\newcommand{\NN}{\mathbb{N}}
\newcommand{\lenpart}[1]{\mathrm{length}(#1)}
\newcommand{\linext}[1]{\mathcal{L}_{#1}}
\newcommand{\Prob}{\mathbb{P}}
\newcommand{\pof}{V}
\newcommand{\poi}{\Lambda}
\newcommand{\oiof}{\subseteq_I}
\newcommand{\blocking}[3]{\mathcal{BI}_{#1}(#2 < #3)}
\newcommand{\abchain}[3]{\mathcal{CH}_{#1}(#2 < #3)}
\newcommand{\lar}{\tilde{\lambda}}
\newcommand{\lamu}{\lambda / \mu}
\newcommand{\lamur}{\tilde{\lambda} / \tilde{\mu} }
\newcommand{\fla}{f^{\lambda}}
\newcommand{\flamu}{f^{\lambda / \mu}}
\newcommand{\flamur}{f^{\tilde{\lambda} / \tilde{\mu}}}
\newcommand{\Young}{\mathcal{Y}}
\newcommand{\antichain}[2]{#1 \parallel #2}
\theoremstyle{plain}
\newtheorem{theorem}{Theorem}[section]
\newtheorem{lemma}[theorem]{Lemma}
\newtheorem{corollary}[theorem]{Corollary}
\newtheorem{conjecture}[theorem]{Conjecture}
\theoremstyle{definition}
\newtheorem{definition}[theorem]{Definition}
\theoremstyle{remark}
\newtheorem{remark}[theorem]{Remark}
\newtheorem{mexample}[theorem]{Example}
\author{Albin Jaldevik\thanks{albin.jaldevik@gmail.com}
\and
Jan Snellman\thanks{Department of Mathematics, Linköping University, 58183 Linköping, Sweden;
jan.snellman@liu.se. Corresponding author.}}
\date{\textit{<2026-05-06 Wed>}}
\title{Poset probability in two-row partition posets}
\begin{document}

\maketitle

\begin{abstract}
We find explicit formulae for poset probabilities \(\Prob(P_\lambda; \alpha < \beta)\)
in partition posets (cell posets) \(P_{\lambda}\) when \(\lambda=(\lambda_{1},\lambda_{2})\) is a two-row partition.
These probabilities are given as rational expressions in \(f^{\sigma / \tau}\), where
\(\tau \subseteq \sigma \subseteq \lambda\).
We then use well-known formulae, such as the hook-length formula for \(\fla\),
the number of standard Young tableaux on a partition \(\lambda\),
and the corresponding determinantal formula by Jacobi-Trudi-Aitken for \(\flamu\),
the number of standard Young tableaux on a skew partition \(\lamu\),
to make the aforementioned expressions explicit.

We also calculate the limit probabilities of \(\Prob(P_\lambda; \alpha < \beta)\)
when the elements \(\alpha,\beta\) are fixed cells,
but the arm-lengths of \(\lambda=(\lambda_{1},\lambda_{2})\) tend to infinity with bounded difference \(\lambda_{1} - \lambda_{2}\).
\end{abstract}
\section{Introduction}
\label{sec:orgecca277}

\subsection{Poset elementa}
\label{sec:org6e72b30}
We will use \(\subset\) for strict inclusion, and \(\subseteq\) for non-strict inclusion.
For a poset \(P = (P, \le)\) we similarly use \(<\) for the \textbf{strict part}, i.e.,
\(x < y\) iff \(x \le y\) and \(x \neq y\). The \textbf{reverse relation} is \(y \ge x \iff x \le y\).

A subset \(U \subseteq P\) is an \textbf{order ideal} if \(u \in U,  v \in P,  v \le u\)
implies that \(v \in U\).
We write \(U \oiof P\)  to indicate that \(U\) is an order ideal of \(P\).
For \(x \in P\), the subset \(\poi_{P}(x) = \{y \in P \mid y \le x\}\)
is the principal (closed) order ideal of \(x\).
Dually, a subset \(V \subseteq P\) is an \textbf{order filter} if \(v \in V, w \in P, v \le w\)
implies that \(w \in V\), and \(\pof_{P}(x) = \{y \in P \mid y \ge x\}\)
is the principal (closed) order filter of \(x\).

The poset \(P\) is a \textbf{lattice} if every two-element subset \(\set{a,b} \subset P\)
has a least upper bound \(a \vee b\) and an greatest lower bound \(a \wedge b\). A lattice
is \textbf{distributive} if the identities
\begin{equation}
\label{eq-distributive}
  \begin{split}
    a \wedge (b \vee c) & = (a \wedge b) \vee (a \wedge c) \\
    a \vee (b \wedge c) & = (a \vee b) \wedge (a \vee c) 
  \end{split}
\end{equation}
hold for all \(a,b,c \in P\).

A subset \(C \subseteq P\) is a \textbf{chain} if \(c,d \in C\) implies that
\(c \le d\) or \(d \le c\). A chain is \textbf{saturated} (or \textbf{maximal}) if
adding any element from \(P \setminus C\) makes it no longer a chain.
If the subset \(P \subseteq P\) is a chain, then we say that the poset \(P\) is a chain.

If \((Q,\le_{Q})\) is another poset, then the \textbf{product} \(P \times Q\) has the Cartesian product
as its underlying set, and order relation
\[
(a,b) \preceq (c,d) \quad  \iff \quad a \le  c \text{ and } b \le_{Q} d.
\]
A product of chains is called a \textbf{chain product}.

The poset \((\NN, \le)\), with \(\le\) the standard ordering, is a chain, denoted by \(C_\infty\).
For a positive integer \(n\), the subset \([n] = \set{1,2,\dots,n} \subset \NN\) is a chain (regarded as a subposet
of \(C_\infty\)), which we denote by \(C_{n}\).

Now suppose that \(P\) is finite, with \(|P| = n\).
By \(\ell(P)\) we mean the number of saturated chains in \(P\).
Denote by \(\linext{P}\) the set of \textbf{linear extensions} (or \textbf{total extensions}) of \(P\), that is, the set
of all  bijections
\begin{equation}
\phi: C_{n} \to P
\end{equation}
with order-preserving inverse;
this corresponds to a sequence
\begin{equation}
\label{eqn-linext-chain}
   \phi(1) , \phi(2) , \cdots , \phi(n), \qquad
i < j \implies  (\phi(i) \not > \phi(j)).
\end{equation}
Denote by \(e(P) = \lvert \linext{P} \rvert\) the cardinality of this set, and,
for \(\alpha,\beta \in P\),
denote by \(\linext{P}(\alpha < \beta)\) the subset consisting
of those linear extensions that place \(\alpha\) before \(\beta\);
such  an extension may be represented as
\begin{equation}
\label{eqn-linext-chain-ab}
   \phi(1) ,\, \phi(2) ,\, \cdots ,\, \phi(k) = \alpha   ,\, \phi(k+1) , \,  \cdots , \, \phi(\ell) = \beta ,\, \phi(\ell + 1) ,\, \cdots , \, \phi(n).
\end{equation}
Denote by \(e(P;\alpha < \beta) = \lvert \linext{P}(\alpha < \beta) \rvert\) the number of such linear extensions.
\subsection{Poset probability}
\label{sec:org0cfbec8}
We define the \textbf{probability}  of \(\alpha\) preceding \(\beta\) as
\begin{equation}
  \Prob(P; \alpha < \beta) =
  \frac{e(P;\alpha < \beta)}{e(P)}.
\end{equation}
If \(\alpha < \beta\) then \(\Prob(P; \alpha < \beta) = 1\), and if \(\alpha \ge \beta\) then \(\Prob(P; \alpha < \beta) = 0\).
The nontrivial case is when \(\alpha,\beta \in P\) are incomparable (written \(x \parallel y\));
then there is at least one linear extension of \(P\) placing \(\alpha\) before \(\beta\),
and at least one placing \(\alpha\) after \(\beta\). In other words,
\(0 < \Prob(P; \alpha < \beta) < 1\) if and only if \(\alpha \parallel \beta\).

We want to find \(\linext{P}(\alpha < \beta)\) and  its cardinality.
One approach, used e.g. by Sagan and Olson \autocite{olson20181},  is to
make use of the following fact:

\begin{lemma}
Denote by \(P(\alpha < \beta)\) the poset obtained by adding \((\alpha,\beta)\) as a
relation to (the graph of) \(P\), as well as all transitive consequences.
Then \(\linext{P}(\alpha < \beta)\)  and \(\linext{P(\alpha < \beta)}\) are in
bijective correspondence, hence \(e(P; \alpha < \beta) = e(P(\alpha < \beta))\).
\end{lemma}
If the poset \(P(\alpha < \beta)\) is easy to construct and analyse, then this is a powerful method. For instance, in Figure \ref{fig-addrel},
those linear extensions of the poset to the left,
that place \((2,1)\) before \((1,2)\),
corresponds to all linear extensions of the poset to the right.

\begin{figure}[htb]
\tiny{
\begin{center}
\begin{tabular}{cc}
\begin{tikzpicture}[>=latex,line join=bevel,]
\node (node_0) at (60.39bp,8.3018bp) [draw,draw=none] {$\left(1, 1\right)$};
  \node (node_1) at (37.39bp,60.906bp) [draw,draw=none] {$\left(1, 2\right)$};
  \node (node_4) at (84.39bp,60.906bp) [draw,draw=none] {$\left(2, 1\right)$};
  \node (node_2) at (14.39bp,113.51bp) [draw,draw=none] {$\left(1, 3\right)$};
  \node (node_5) at (61.39bp,113.51bp) [draw,draw=none] {$\left(2, 2\right)$};
  \node (node_3) at (14.39bp,166.11bp) [draw,draw=none] {$\left(1, 4\right)$};
  \node (node_6) at (108.39bp,113.51bp) [draw,draw=none] {$\left(3, 1\right)$};
  \draw [black,->] (node_0) ..controls (54.102bp,23.136bp) and (49.301bp,33.7bp)  .. (node_1);
  \draw [black,->] (node_0) ..controls (66.951bp,23.136bp) and (71.962bp,33.7bp)  .. (node_4);
  \draw [black,->] (node_1) ..controls (31.102bp,75.74bp) and (26.301bp,86.304bp)  .. (node_2);
  \draw [black,->] (node_1) ..controls (43.951bp,75.74bp) and (48.962bp,86.304bp)  .. (node_5);
  \draw [black,->] (node_2) ..controls (14.39bp,128.04bp) and (14.39bp,137.98bp)  .. (node_3);
  \draw [black,->] (node_4) ..controls (78.102bp,75.74bp) and (73.301bp,86.304bp)  .. (node_5);
  \draw [black,->] (node_4) ..controls (90.951bp,75.74bp) and (95.962bp,86.304bp)  .. (node_6);
\end{tikzpicture}
 & 
\begin{tikzpicture}[>=latex,line join=bevel,]
\node (node_0) at (60.39bp,8.3018bp) [draw,draw=none] {$\left(1, 1\right)$};
  \node (node_1) at (60.39bp,60.906bp) [draw,draw=none] {$\left(2, 1\right)$};
  \node (node_2) at (37.39bp,113.51bp) [draw,draw=none] {$\left(1, 2\right)$};
  \node (node_6) at (84.39bp,113.51bp) [draw,draw=none] {$\left(3, 1\right)$};
  \node (node_3) at (14.39bp,166.11bp) [draw,draw=none] {$\left(1, 3\right)$};
  \node (node_5) at (61.39bp,166.11bp) [draw,draw=none] {$\left(2, 2\right)$};
  \node (node_4) at (14.39bp,218.72bp) [draw,draw=none] {$\left(1, 4\right)$};
  \draw [black,->] (node_0) ..controls (60.39bp,22.834bp) and (60.39bp,32.769bp)  .. (node_1);
  \draw [black,->] (node_1) ..controls (54.102bp,75.74bp) and (49.301bp,86.304bp)  .. (node_2);
  \draw [black,->] (node_1) ..controls (66.951bp,75.74bp) and (71.962bp,86.304bp)  .. (node_6);
  \draw [black,->] (node_2) ..controls (31.102bp,128.34bp) and (26.301bp,138.91bp)  .. (node_3);
  \draw [black,->] (node_2) ..controls (43.951bp,128.34bp) and (48.962bp,138.91bp)  .. (node_5);
  \draw [black,->] (node_3) ..controls (14.39bp,180.64bp) and (14.39bp,190.58bp)  .. (node_4);
\end{tikzpicture}
\end{tabular}
\end{center}
}
\caption{Adding a relation to a poset forces linear extensions}
\label{fig-addrel}
\end{figure}

Another way of calculating \(e(P; \alpha < \beta)\) is implicit in \autocite{kangasCountingLinearExtensions}; similar ideas
are used in \autocite{deloofExploitingLatticeIdeals2006} and in \autocite{peczarskiNewResultsMinimumComparison2004a}.

\begin{remark}
When writing the first version of this manuscript,
we were unaware of these references, and re-discovered this method,
generalising from more specialised lattice-path techniques.
\end{remark}

Let \(D,U\) be a set partition of \(P \setminus \set{\alpha}\),
such that \(D\) is an order ideal containing \(\poi_{P}(\alpha) \setminus \set{\alpha}\),
\(U\) is an order filter containing \(\pof_{P}(\alpha) \setminus \set{\alpha}\) and \(\beta\).
Then a linear extension of \(D\) and a linear extension of \(U\)
can be spliced together to produce a linear extension of \(P\) placing \(\alpha\) before \(\beta\). Furthermore, all linear extension of \(P\) placing \(\alpha\) before \(\beta\)
are obtained in this way, so
\begin{equation}\tag{AP}
\label{eq:admissible-partition}
e(P; \alpha < \beta) = \sum_{{(D,U)}} e(D)\cdot e(U),
\end{equation}
the sum taken over all admissible partitions. This method is especially useful when \(e(D)\) and \(e(U)\) are easily calculable. We will apply it to \textbf{partition posets}, for which there are a plethora of useful formulas for \(e(D)\) and \(e(U)\).
\subsection{Partition posets}
\label{sec:orgbefd2b9}
A partition \(\lambda\) is a finite, weakly decreasing sequence of
positive integers (called \textbf{parts})
\begin{equation}
\lambda=(\lambda_{1},\lambda_{2},\dots,\lambda_{d}), \qquad \lambda_{1} \ge \lambda_{2} \ge \cdots \ge \lambda_{d} >0.
\end{equation}
The integer \(d=\lenpart{\lambda}\) is the \textbf{length}  of \(\lambda\), and
the \textbf{weight} of \(\lambda\) is \(|\lambda| = \lambda_{1} + \lambda_{2} + \cdots + \lambda_{d}\).
We write \(\lambda \vdash n\) and say that \(\lambda\) is a partition of \(n\).

The (Ferrers, or Young) \textbf{diagram} of \(\lambda\) is the set of coordinates, or boxes, or cells,
\begin{equation}
  \{(i,j) \mid 1 \le i \le d, \, 1 \le j \le \lambda_{i}\} \subset [d] \times [\lambda_{1}].
\end{equation}  
In \textbf{British notation} the diagram of \(\lambda\) is displayed as follows:
the cell \((i,j)\) sits in row \(i\), column \(j\), with the rows extending downwards.
By flipping the Ferrers diagram along its main diagonal,
one obtains the \textbf{conjugate partition}
\begin{equation}
    \label{eq:conjugate}
    \lambda'= (\lambda_{1}', \dots, \lambda_{s}'),
    \quad \lambda_{j}' = \max(\{i \mid \lambda_{i} \ge j\}). 
\end{equation}

Conjugation is an involution on the set of all partitions of \(n\).
As an example,
\[
\lambda = (4,2,1) =  \ydiagram{4,2,1}, \qquad \lambda' = (3,2,1,1) = \ydiagram{3,2,1,1}.
\]

The cells of \(\lambda\) form a subset of the elements of \(C_{d} \times C_{\lambda_{1}}\),
the Cartesian product of two chains. The induced sub-poset is denoted \(P_{\lambda}\) and
is called the \textbf{partition poset}, or \textbf{cell poset}, of \(\lambda\). The Hasse diagram of
\(P_{{(4,2,1)}}\) is shown in Figure \ref{fig:p421}.

\begin{figure}[htb]
\tiny{
\begin{center}
\begin{tikzpicture}[>=latex,line join=bevel,]
\node (node_0) at (60.39bp,8.3018bp) [draw,draw=none] {$\left(1, 1\right)$};
  \node (node_1) at (37.39bp,60.906bp) [draw,draw=none] {$\left(1, 2\right)$};
  \node (node_4) at (84.39bp,60.906bp) [draw,draw=none] {$\left(2, 1\right)$};
  \node (node_2) at (14.39bp,113.51bp) [draw,draw=none] {$\left(1, 3\right)$};
  \node (node_5) at (61.39bp,113.51bp) [draw,draw=none] {$\left(2, 2\right)$};
  \node (node_3) at (14.39bp,166.11bp) [draw,draw=none] {$\left(1, 4\right)$};
  \node (node_6) at (108.39bp,113.51bp) [draw,draw=none] {$\left(3, 1\right)$};
  \draw [black,->] (node_0) ..controls (54.102bp,23.136bp) and (49.301bp,33.7bp)  .. (node_1);
  \draw [black,->] (node_0) ..controls (66.951bp,23.136bp) and (71.962bp,33.7bp)  .. (node_4);
  \draw [black,->] (node_1) ..controls (31.102bp,75.74bp) and (26.301bp,86.304bp)  .. (node_2);
  \draw [black,->] (node_1) ..controls (43.951bp,75.74bp) and (48.962bp,86.304bp)  .. (node_5);
  \draw [black,->] (node_2) ..controls (14.39bp,128.04bp) and (14.39bp,137.98bp)  .. (node_3);
  \draw [black,->] (node_4) ..controls (78.102bp,75.74bp) and (73.301bp,86.304bp)  .. (node_5);
  \draw [black,->] (node_4) ..controls (90.951bp,75.74bp) and (95.962bp,86.304bp)  .. (node_6);
\end{tikzpicture}
\end{center}
}
\caption{The Hasse diagram of the partition poset on \(\lambda=(4,2,1)\)}
\label{fig:p421}
\end{figure}

When \(\mu, \lambda\) are partitions, we say that \(\mu\) is a \textbf{subpartition} of \(\lambda\),
written \(\mu \subseteq \lambda\), if \(\mu_{i} \le \lambda_{i}\)  for all \(i\); this occurs precisely when
\(P_\mu \oiof P_\lambda\). The \textbf{skewpartition} \(\lambda / \mu\) is the ordered pair \((\lambda,\mu)\);
its partition poset \(P_{\lambda/\mu}\) is the subposet of \(P_\lambda\) consisting of those cells of
\(P_\lambda\) that do not belong to \(P_\mu\).
\subsection{Linear extensions of partition posets}
\label{sec:org26eda50}
A linear extension \(\sigma: C_{n} \to P_\lambda\) of \(P_\lambda\) correspond to a
a saturated chain of subpartitions of \(\lambda\). This can be depicted as a
\textbf{Standard Young Tableaux} (SYT) on \(\lambda\), where the cell \((i,j)\) is labeled with \(\sigma^{-1}(i,j)\).

Furthermore, if \(\alpha\) and \(\beta\) are two incomparable elements, then
\(\linext{P}(\alpha < \beta)\) corresponds to SYT on \(\lambda\) with smaller value
at position \(\alpha\) than at position \(\beta\).

The so-called \emph{Young's lattice} \(\Young\),
the first ranks of which are shown in Figure \ref{fig-Younglattice},
consists of all partitions,
partially ordered by inclusion.
Thus a linear extension of \(P_{\lambda}\) correspond (in addition to a SYT on \(\lambda\))
to a saturated chain in \(\Young\) starting
from the unique minimal element \(\emptyset\) and ending at \(\lambda\).

\begin{figure}[htb]
\begin{center}
\resizebox{!}{5cm}{%
\begin{tikzpicture}[>=latex,line join=bevel,]
\node (node_0) at (95.841bp,7.3332bp) [draw,draw=none] {${\emptyset}$};
  \node (node_1) at (95.841bp,59.965bp) [draw,draw=none] {${\def\lr#1{\multicolumn{1}{|@{\hspace{.6ex}}c@{\hspace{.6ex}}|}{\raisebox{-.3ex}{$#1$}}}\raisebox{-.6ex}{$\begin{array}[b]{*{1}c}\cline{1-1}\lr{\phantom{x}}\\\cline{1-1}\end{array}$}}$};
  \node (node_2) at (75.841bp,120.54bp) [draw,draw=none] {${\def\lr#1{\multicolumn{1}{|@{\hspace{.6ex}}c@{\hspace{.6ex}}|}{\raisebox{-.3ex}{$#1$}}}\raisebox{-.6ex}{$\begin{array}[b]{*{1}c}\cline{1-1}\lr{\phantom{x}}\\\cline{1-1}\lr{\phantom{x}}\\\cline{1-1}\end{array}$}}$};
  \node (node_5) at (116.84bp,120.54bp) [draw,draw=none] {${\def\lr#1{\multicolumn{1}{|@{\hspace{.6ex}}c@{\hspace{.6ex}}|}{\raisebox{-.3ex}{$#1$}}}\raisebox{-.6ex}{$\begin{array}[b]{*{2}c}\cline{1-2}\lr{\phantom{x}}&\lr{\phantom{x}}\\\cline{1-2}\end{array}$}}$};
  \node (node_3) at (49.841bp,193.07bp) [draw,draw=none] {${\def\lr#1{\multicolumn{1}{|@{\hspace{.6ex}}c@{\hspace{.6ex}}|}{\raisebox{-.3ex}{$#1$}}}\raisebox{-.6ex}{$\begin{array}[b]{*{1}c}\cline{1-1}\lr{\phantom{x}}\\\cline{1-1}\lr{\phantom{x}}\\\cline{1-1}\lr{\phantom{x}}\\\cline{1-1}\end{array}$}}$};
  \node (node_6) at (96.841bp,193.07bp) [draw,draw=none] {${\def\lr#1{\multicolumn{1}{|@{\hspace{.6ex}}c@{\hspace{.6ex}}|}{\raisebox{-.3ex}{$#1$}}}\raisebox{-.6ex}{$\begin{array}[b]{*{2}c}\cline{1-2}\lr{\phantom{x}}&\lr{\phantom{x}}\\\cline{1-2}\lr{\phantom{x}}\\\cline{1-1}\end{array}$}}$};
  \node (node_4) at (8.8408bp,277.55bp) [draw,draw=none] {${\def\lr#1{\multicolumn{1}{|@{\hspace{.6ex}}c@{\hspace{.6ex}}|}{\raisebox{-.3ex}{$#1$}}}\raisebox{-.6ex}{$\begin{array}[b]{*{1}c}\cline{1-1}\lr{\phantom{x}}\\\cline{1-1}\lr{\phantom{x}}\\\cline{1-1}\lr{\phantom{x}}\\\cline{1-1}\lr{\phantom{x}}\\\cline{1-1}\end{array}$}}$};
  \node (node_7) at (49.841bp,277.55bp) [draw,draw=none] {${\def\lr#1{\multicolumn{1}{|@{\hspace{.6ex}}c@{\hspace{.6ex}}|}{\raisebox{-.3ex}{$#1$}}}\raisebox{-.6ex}{$\begin{array}[b]{*{2}c}\cline{1-2}\lr{\phantom{x}}&\lr{\phantom{x}}\\\cline{1-2}\lr{\phantom{x}}\\\cline{1-1}\lr{\phantom{x}}\\\cline{1-1}\end{array}$}}$};
  \node (node_9) at (148.84bp,193.07bp) [draw,draw=none] {${\def\lr#1{\multicolumn{1}{|@{\hspace{.6ex}}c@{\hspace{.6ex}}|}{\raisebox{-.3ex}{$#1$}}}\raisebox{-.6ex}{$\begin{array}[b]{*{3}c}\cline{1-3}\lr{\phantom{x}}&\lr{\phantom{x}}&\lr{\phantom{x}}\\\cline{1-3}\end{array}$}}$};
  \node (node_8) at (96.841bp,277.55bp) [draw,draw=none] {${\def\lr#1{\multicolumn{1}{|@{\hspace{.6ex}}c@{\hspace{.6ex}}|}{\raisebox{-.3ex}{$#1$}}}\raisebox{-.6ex}{$\begin{array}[b]{*{2}c}\cline{1-2}\lr{\phantom{x}}&\lr{\phantom{x}}\\\cline{1-2}\lr{\phantom{x}}&\lr{\phantom{x}}\\\cline{1-2}\end{array}$}}$};
  \node (node_10) at (148.84bp,277.55bp) [draw,draw=none] {${\def\lr#1{\multicolumn{1}{|@{\hspace{.6ex}}c@{\hspace{.6ex}}|}{\raisebox{-.3ex}{$#1$}}}\raisebox{-.6ex}{$\begin{array}[b]{*{3}c}\cline{1-3}\lr{\phantom{x}}&\lr{\phantom{x}}&\lr{\phantom{x}}\\\cline{1-3}\lr{\phantom{x}}\\\cline{1-1}\end{array}$}}$};
  \node (node_11) at (211.84bp,277.55bp) [draw,draw=none] {${\def\lr#1{\multicolumn{1}{|@{\hspace{.6ex}}c@{\hspace{.6ex}}|}{\raisebox{-.3ex}{$#1$}}}\raisebox{-.6ex}{$\begin{array}[b]{*{4}c}\cline{1-4}\lr{\phantom{x}}&\lr{\phantom{x}}&\lr{\phantom{x}}&\lr{\phantom{x}}\\\cline{1-4}\end{array}$}}$};
  \draw [black,->] (node_0) ..controls (95.841bp,20.716bp) and (95.841bp,30.515bp)  .. (node_1);
  \draw [black,->] (node_1) ..controls (90.771bp,75.813bp) and (87.401bp,85.683bp)  .. (node_2);
  \draw [black,->] (node_1) ..controls (101.68bp,77.24bp) and (106.26bp,90.026bp)  .. (node_5);
  \draw [black,->] (node_2) ..controls (67.858bp,143.19bp) and (64.424bp,152.51bp)  .. (node_3);
  \draw [black,->] (node_2) ..controls (82.738bp,144.7bp) and (86.218bp,156.39bp)  .. (node_6);
  \draw [black,->] (node_3) ..controls (35.801bp,222.31bp) and (28.831bp,236.33bp)  .. (node_4);
  \draw [black,->] (node_3) ..controls (49.841bp,223.39bp) and (49.841bp,234.75bp)  .. (node_7);
  \draw [black,->] (node_5) ..controls (111.96bp,138.75bp) and (107.63bp,154.03bp)  .. (node_6);
  \draw [black,->] (node_5) ..controls (125.38bp,140.36bp) and (134.12bp,159.62bp)  .. (node_9);
  \draw [black,->] (node_6) ..controls (82.816bp,218.68bp) and (74.276bp,233.67bp)  .. (node_7);
  \draw [black,->] (node_6) ..controls (96.841bp,219.79bp) and (96.841bp,236.85bp)  .. (node_8);
  \draw [black,->] (node_6) ..controls (113.43bp,220.38bp) and (124.9bp,238.58bp)  .. (node_10);
  \draw [black,->] (node_9) ..controls (148.84bp,213.56bp) and (148.84bp,234.35bp)  .. (node_10);
  \draw [black,->] (node_9) ..controls (165.21bp,215.49bp) and (185.28bp,241.77bp)  .. (node_11);
\end{tikzpicture}
}
\end{center}
\caption{The first ranks of the Young lattice}
\label{fig-Younglattice}
\end{figure}

\begin{mexample}
The saturated chain (of partitions)
\begin{equation*}
  \emptyset \subset \ydiagram{1} \subset \ydiagram{2} \subset \ydiagram{3} \subset \ydiagram{3,1} 
\end{equation*}
corresponds to
\begin{equation*}
\begin{ytableau}
  1 & 2 & 3 \\
  4
\end{ytableau},
\end{equation*}
whereas
\begin{equation*}
  \emptyset \subset \ydiagram{1} \subset \ydiagram{1,1} \subset \ydiagram{2,1}  \subset \ydiagram{3,1}
\end{equation*}
corresponds to
\begin{equation*}
\begin{ytableau}
  1 & 3 & 4 \\
  2
\end{ytableau}.
\end{equation*}
\label{example-SYT}
\end{mexample}

It is also of interest to study saturated chains in other finite intervals of \(\Young\).
Such chains can be represented as \emph{standard skew tableaux}.

\begin{mexample}
The saturated chains
\begin{equation*}
\ydiagram{2}  \subset \ydiagram{3} \subset \ydiagram{3,1}  \subset \ydiagram{3,2} \quad \text{ and } \quad
\ydiagram{2}  \subset \ydiagram{2,1} \subset \ydiagram{2,2} \subset \ydiagram{3,2}
\end{equation*}
correspond to
\begin{equation*}
  \begin{ytableau}
    \none & \none &1 \\
    2 & 3
  \end{ytableau}
  \quad \text{ and } \quad
  \begin{ytableau}
    \none & \none & 3 \\
    1 & 2
  \end{ytableau}.
\end{equation*}
\end{mexample}
\subsection{Poset probability for partition posets}
\label{sec:org6e8e1e4}
For a partition poset \(P_{\lambda}\) there are several well-known formulae for counting
\(e(P_{\lambda})\). These methods extend to skew-partitions \(\lambda / \mu\).
To count \(e(P_{\lambda}; \alpha < \beta)\), we use a version of (\ref{eq:admissible-partition}),
which now becomes
\begin{equation}
e(P_{\lambda}; \alpha < \beta) =  \sum_{T \in \mathcal{B}} f^{T} f^{\lambda / (T \cup \set{\alpha})}.
\end{equation}
The set \(\mathcal{B}\) of \textbf{blocking partitions} can be found using the \textbf{decorated tableau}
of \(\lambda\), see Example \ref{example-lam-small}. 

In this article, we will calculate exact values for \(\Prob({P_{\lambda}}; \alpha < \beta)\) when \(\lambda=(\lambda_{1},\lambda_{2})\).
We will use this to compute the limit for \(\Prob({P_{\lambda}}; \alpha < \beta)\)
when \(\lambda = (\lambda_{1},\lambda_{2})\) tends to \((\infty,\infty)\) in such a way that \(\lambda_{1} - \lambda_{2}\) remains bounded.
The fact that this limit exists shows that it can be calculated as the limit of
\(\Prob({C_{m} \times C_{2}}; \alpha < \beta)\) as \(m \to \infty\);
the poset \(C_{m} \times C_{2}\) is often called the ``Catalan poset''.
Sagan and Olson \autocite{olson20181}  proved an exact formula for \(\Prob(P_{\lambda}; \alpha < \beta)\)
when \(\lambda\) is of \textbf{rectangular shape}, i.e. \(P_{\lambda} = C_{m} \times C_{n}\), and when \(\alpha=(1,2)\),
\(\beta=(2,1)\).

We note that there is a decently sized body of literature on asymptotic results for  probabilities on partition posets \autocite{Chan-catalan,chanSortingProbabilityLarge}.
\section{Blocking ideals - filtering linear extensions}
\label{sec:orgc36e96e}
We now describe in more detail how equation (\ref{eq:admissible-partition}) can be used to
find \(\linext{P}(\alpha < \beta)\).  The easy proofs
of results in this section are omitted.
\subsection{Blocking ideals and linear extensions}
\label{sec:org3c60239}
Recall the following results from lattice theory.
\begin{definition}
Let \(P\) be a finite poset, and put
\(J(P) = \{U \subseteq P \, \mid \, U \oiof P\}\).
Order these order ideals by inclusion to turn \(J(P)\)
into a poset.
\label{def-JP}
\end{definition}

\begin{theorem}[Birkhoff's representation theorem]
The poset \(J(P)\) is a distributive lattice. Conversely, any
finite distributive lattice \(L\) is of the form \(L=J(P)\)
for some finite poset \(P\).
\label{birk-repr}
\end{theorem}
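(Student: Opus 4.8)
The plan is to treat the two halves of the statement in turn, the first being routine and the second requiring the one genuinely non-trivial idea.

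For the claim that $J(P)$ is a distributive lattice, I would argue directly. Given order ideals $U, V \oiof P$, both $U \cap V$ and $U \cup V$ are again order ideals — an element below something lying in the intersection (resp. union) lies below something in $U$ or in $V$, hence lies in $U$ or in $V$ — so $J(P)$ is closed under the meet and join operations of the Boolean lattice $2^{P}$, with $\emptyset$ and $P$ as bottom and top. Since the distributive law $U \cap (V \cup W) = (U \cap V) \cup (U \cap W)$ holds in $2^{P}$, it holds a fortiori in the sublattice $J(P)$. This settles the first assertion.

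For the converse, let $L$ be a finite distributive lattice. The candidate poset is $P := \{\, p \in L : p \text{ is join-irreducible} \,\}$, where $p$ is \emph{join-irreducible} if $p$ is not the bottom element and $p = a \vee b$ forces $p \in \{a,b\}$; in a finite lattice this is equivalent to $p$ covering exactly one element. Give $P$ the order induced from $L$; it is finite since $P \subseteq L$. Define $\eta \colon L \to J(P)$ by $\eta(x) = \{\, p \in P : p \le x \,\}$, the set of join-irreducibles of $L$ lying below $x$. One checks at once that $\eta(x) \oiof P$ and that $\eta$ is order-preserving. I would then show $\eta$ is an order isomorphism in two steps. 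First, the standard fact that in a finite lattice every element is the join of the join-irreducibles below it, i.e. $x = \bigvee \eta(x)$, shows simultaneously that $\eta$ is injective and order-reflecting: $\eta(x) \subseteq \eta(y)$ gives $x = \bigvee \eta(x) \le \bigvee \eta(y) = y$. Second, for surjectivity, take $I \oiof P$, set $x = \bigvee I$, and prove $\eta(x) = I$; the inclusion $I \subseteq \eta(x)$ is immediate, and the reverse inclusion is the crux.

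That reverse inclusion is the main obstacle, and it is precisely where distributivity is used. I would isolate it as the lemma that in a distributive lattice every join-irreducible $p$ is \emph{join-prime}: if $p \le a_{1} \vee \cdots \vee a_{k}$ then $p \le a_{i}$ for some $i$. For $k = 2$ this follows from $p = p \wedge (a_{1} \vee a_{2}) = (p \wedge a_{1}) \vee (p \wedge a_{2})$ together with join-irreducibility of $p$, and the general $k$ follows by an easy induction. Applying the lemma with the $a_{i}$ ranging over the finitely many elements of $I$: if $p \in P$ and $p \le \bigvee I$, then $p \le q$ for some $q \in I$, and since $I$ is down-closed in $P$ we conclude $p \in I$. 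Hence $\eta(x) = I$, so $\eta$ is bijective, and combined with the previous step it is an isomorphism $L \cong J(P)$, as required.
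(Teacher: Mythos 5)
The paper does not prove this statement at all: it is quoted as Birkhoff's classical representation theorem, with a citation to the literature, and is then used as a black box. Your proposal supplies the standard textbook proof, and it is correct. The first half (closure of $J(P)$ under union and intersection inside the Boolean lattice $2^{P}$, hence distributivity inherited from $2^{P}$) is fine, though your one-line justification of closure is phrased for unions only — for the intersection you need that an element below some $u \in U \cap V$ lies in $U$ \emph{and} in $V$, not ``or''; this is cosmetic. The second half correctly identifies the two load-bearing facts: that every element of a finite lattice is the join of the join-irreducibles below it (which gives injectivity and order-reflection of $\eta$), and that distributivity forces join-irreducibles to be join-prime (which gives surjectivity, via $p \le \bigvee I \Rightarrow p \le q$ for some $q \in I$ and down-closure of $I$). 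The two-term computation $p = (p \wedge a_{1}) \vee (p \wedge a_{2})$ plus induction is exactly the right use of distributivity, and the edge case $I = \emptyset$ causes no trouble since join-irreducibles are by definition not the bottom element. Since the paper offers nothing to compare against, the only remark worth making is that your argument is self-contained where the paper defers entirely to a reference; nothing in your write-up would fail.
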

\begin{proof}
See  \autocite{BirkhoffRepr}.
\end{proof}
\begin{theorem}
Let \(P\) be a poset with \(n < \infty\) elements.
Linear extensions in \(\linext{P}\) correspond to saturated chains in \(J(P)\),
by pairing the linear extension \(\phi: C_{n} \to P\) with the saturated chain
\begin{equation}
\emptyset \subset \{\phi(1)\} \subset \{\phi(1),\phi(2)\} \cdots \subset \{\phi(1),\dots,\phi(n)\} = P
\end{equation}
In particular, \(e(P) = \ell(J(P))\).
\end{theorem}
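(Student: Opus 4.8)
The plan is to exhibit an explicit pair of mutually inverse maps between $\linext{P}$ and the set of saturated chains of $J(P)$, the forward map being the one displayed in the statement, and then read off the enumerative consequence. Throughout I will lean on the order-preservation of $\phi^{-1}$ for a linear extension, and on the structure of covering relations in $J(P)$.

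First I would check that the forward map is well defined. Given $\phi\in\linext{P}$, set $U_k=\{\phi(1),\dots,\phi(k)\}$ for $0\le k\le n$. Each $U_k$ is an order ideal: if $v\le\phi(i)$ with $i\le k$, then order-preservation of $\phi^{-1}$ gives $\phi^{-1}(v)\le i\le k$, so $v\in U_k$. Since $\phi$ is a bijection, $\lvert U_k\rvert=k$, so consecutive ideals differ by exactly one element; as no order ideal can lie strictly between a set and a superset with one more element, each step $U_{k-1}\lessdot U_k$ is a covering relation of $J(P)$. A chain consisting of covering steps and running from the bottom element $\emptyset$ to the top element $P$ of the lattice admits no further element, hence is saturated.

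Next comes the converse map, where the one genuinely structural input is needed: in $J(P)$, $U\lessdot V$ if and only if $V=U\cup\{x\}$ for a single $x\in P$. Equivalently, whenever $U\subsetneq V$ are order ideals with $\lvert V\rvert>\lvert U\rvert+1$, there is an intermediate order ideal, obtained by deleting from $V$ a maximal element of $V\setminus U$ (such an element is automatically maximal in $V$, since any $y\in V$ with $y>x$ would lie outside $U$ by down-closure of $U$, contradicting maximality of $x$ in $V\setminus U$). Consequently $J(P)$ is graded by cardinality, so every saturated chain has length $n$ and runs from $\emptyset$ to $P$ — indeed any chain omitting $\emptyset$ or $P$ is not maximal, as those elements are comparable to everything. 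Given a saturated chain $\emptyset=U_0\subsetneq U_1\subsetneq\cdots\subsetneq U_n=P$, each step adjoins a single element $x_k$, and $x_1,\dots,x_n$ enumerate $P$; define $\phi(k)=x_k$. Then $\phi\in\linext{P}$: if $a\le b$ in $P$ with $b=x_j$, then $b\in U_j$, which is an order ideal, so $a\in U_j=\{x_1,\dots,x_j\}$, giving $\phi^{-1}(a)\le j=\phi^{-1}(b)$.

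Finally I would note that the two constructions are manifestly inverse to one another, which establishes the bijection; the identity $e(P)=\ell(J(P))$ is then immediate, since $\ell(J(P))$ is by definition the number of saturated chains in $J(P)$. The only step that is more than bookkeeping is the cover description of $J(P)$ — that covering relations correspond precisely to adjoining one element — and this is exactly the place where one uses that $J(P)$ consists of down-closed subsets rather than arbitrary ones; I expect this to be the crux, while the rest follows formally from order-preservation of $\phi^{-1}$.
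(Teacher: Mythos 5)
Your proof is correct and complete, and it supplies exactly the argument the paper omits: the paper gives no proof of this theorem, deferring entirely to a citation of Stanley's \emph{Enumerative Combinatorics}. Your write-up is the standard argument from that source, and you correctly isolate the one nontrivial point — that covering relations in \(J(P)\) adjoin exactly one element, which you justify by deleting a maximal element of \(V\setminus U\) — with the rest following from order-preservation of \(\phi^{-1}\).
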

\begin{proof}
See  \autocite{stanley-enum1}.
\end{proof}

Now let
\(\sigma \in \linext{P} (\alpha < \beta)\), and represent it by a sequentially ordered list of elements in \(P\),
as in (\ref{eqn-linext-chain-ab}).
We want to split this linear extension into two parts; from the beginning up
to \(\alpha\), and then from \(\alpha\) to the end. If we pass to \(J(P)\),
then the linear extension \(\sigma\) correspond to a \emph{saturated chain}  of  order ideals in \(J(P)\)
\begin{equation}
\label{eq:JP-chain}
\emptyset \subset U_{1} \subset \cdots U_{i-1} \subset U_{i} \subset \cdots \subset U_{n} = P
\end{equation}
where \(\alpha, \beta \not \in U_{{i-1}}\), \(\alpha \in U_{i}\). That is
to say, \(\alpha\) is introduced at stage \(i\), and \(\beta\) at a later stage.

The order ideal \(U_{i-1}\) is covered by \(U_{i}\), and is the last
order ideal in the chain that does not contain \(\alpha\). We call it a
\emph{blocking ideal}.

\begin{definition}
Let \(\alpha,\beta\) be two incomparable elements in the finite poset \(P\).
The \textbf{blocking ideals} \(\blocking{P}{\alpha}{\beta}\) is the set of order ideals of \(P\) without \(\alpha\) and \(\beta\),
and where we can add \(\alpha\) and get a new order ideal:
\begin{equation}
\label{eq-block-def}
\blocking{P}{\alpha}{\beta} \triangleq
\setsuchas{T \in J(P)}{\alpha \notin T,\, \beta \notin T,\, T \cup \set{\alpha} \in J(P)}.
\end{equation}
\label{def-blocking-ideals}
\end{definition}

Comparing this with equation (\ref{eq:admissible-partition}),
the blocking ideal \(T\) corresponds to \(D\),
and \(U\) to  \(P \setminus (D \cup \set{\alpha})\).
Hence, these blocking ideals enable us to describe \(\linext{P}(\alpha < b)\).

\begin{definition}
When \(P\) is a finite poset and \(\alpha,\beta\) are two incomparable elements in \(P\),
the set of saturated chains in \(J(P)\) which add \(\alpha\) before \(\beta\)
is denoted \(\abchain{P}{\alpha}{\beta}\).
By \(C_P(\alpha < \beta)\) we denote the set of chains in \(J(P)\) defined by
\begin{equation}
\label{eq:chain-refine}
C_{P}(\alpha < \beta) \triangleq
\setsuchas{\emptyset \subset T \subset T \cup \set{\alpha} \subset P}
          {T \in \blocking{P}{\alpha}{\beta}}.
\end{equation}
\label{def-abchains}
\end{definition}

\begin{theorem}
Let \(P\) be a finite poset and let \(\alpha, \beta \in P\) be two incomparable elements.
Then the following hold:
\begin{enumerate}
\item Every element in \(\abchain{P}{\alpha}{\beta}\) (a saturated chain in \(J(P)\) adding \(\alpha\) before \(\beta\)) is a refinement of precisely one chain in \(C_P(\alpha < \beta)\).

\item The total number of saturated chains in \(J(P)\) adding \(\alpha\) before \(\beta\) can be calculated as
\end{enumerate}

\begin{equation}
\label{eq:block-count}
|\abchain{P}{\alpha}{\beta}| =
\sum_{T \in \blocking{P}{\alpha}{\beta}}
     \ell(\poi_{J(P)}(T)) \ell(\pof_{J(P)}(T \cup \set{\alpha})).
\end{equation}
\label{thm-blocking-refines}
\end{theorem}

\begin{corollary}
Let \(P\) be a finite poset and let \(\alpha, \beta \in P\)
be two incomparable elements. Then

\begin{equation}
\label{eq:e-refine}
e(P; \alpha < \beta) =
\sum_{T \in \blocking{P}{\alpha}{\beta}} e(T)
e(P \setminus (T \cup \set{\alpha})). \tag{LINab}
\end{equation}
\label{cor-refine}
\end{corollary}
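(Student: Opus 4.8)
The plan is to derive Corollary~\ref{cor-refine} directly from Theorem~\ref{thm-blocking-refines} by translating each factor in the counting formula (\ref{eq:block-count}) from the language of saturated chains in $J(P)$ into the language of linear extensions of sub- and quotient-posets of $P$. The bridge is the correspondence (already recalled in the excerpt) between linear extensions of a finite poset $Q$ and saturated chains in $J(Q)$, which gives $e(Q) = \ell(J(Q))$.

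First I would fix $T \in \blocking{P}{\alpha}{\beta}$ and set $\bar T = T \cup \set{\alpha}$. I claim $\ell(\poi_{J(P)}(T)) = e(T)$: the interval $\poi_{J(P)}(T)$ consists of all order ideals of $P$ contained in $T$, and since $T$ is itself an order ideal, these are exactly the order ideals of the induced subposet $T$, so $\poi_{J(P)}(T) \cong J(T)$ as posets, whence its number of saturated chains is $\ell(J(T)) = e(T)$. Dually, I claim $\ell(\pof_{J(P)}(\bar T)) = e(P \setminus \bar T)$: the interval $\pof_{J(P)}(\bar T)$ consists of all order ideals of $P$ containing $\bar T$; the map $U \mapsto U \setminus \bar T$ is an isomorphism from this interval onto $J(P \setminus \bar T)$, because $\bar T$ is an order ideal of $P$ (so $P \setminus \bar T$ is an order filter, and down-closed subsets of $P$ containing $\bar T$ correspond bijectively, preserving inclusion, to down-closed subsets of the subposet $P\setminus \bar T$). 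Hence $\ell(\pof_{J(P)}(\bar T)) = \ell(J(P \setminus \bar T)) = e(P\setminus \bar T)$. Substituting both identities into (\ref{eq:block-count}), and noting that $|\abchain{P}{\alpha}{\beta}| = e(P;\alpha<\beta)$ by the chain--linear-extension correspondence applied to $\linext{P}(\alpha<\beta)$, yields (\ref{eq:e-refine}).

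The step I expect to require the most care is verifying that the interval $\pof_{J(P)}(\bar T)$ really is isomorphic to $J(P\setminus\bar T)$, i.e. checking that $U \mapsto U\setminus \bar T$ is well-defined (that $U \setminus \bar T$ is an order ideal of the subposet $P \setminus \bar T$ whenever $U \oiof P$ and $U \supseteq \bar T$) and that its inverse $W \mapsto W \cup \bar T$ lands in $J(P)$ (that $W \cup \bar T \oiof P$ whenever $W \oiof (P\setminus\bar T)$). Both reduce to the fact that $\bar T$ is a down-set and $P \setminus \bar T$ is the complementary up-set, but one should spell out that no element of $\bar T$ lies above any element of $P\setminus\bar T$, so adding or removing the block $\bar T$ cannot create or destroy the down-closure condition. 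The dual statement for $\poi_{J(P)}(T) \cong J(T)$ is the easier one and needs only that $T$ is an order ideal. Everything else is bookkeeping.
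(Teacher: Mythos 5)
Your proposal is correct and follows essentially the same route as the paper's own proof: both arguments substitute into equation (\ref{eq:block-count}) the two isomorphisms $\poi_{J(P)}(T) \cong J(T)$ and $\pof_{J(P)}(T \cup \set{\alpha}) \cong J(P \setminus (T \cup \set{\alpha}))$, the latter via the map $S \mapsto S \setminus (T \cup \set{\alpha})$ with inverse $R \mapsto R \cup (T \cup \set{\alpha})$, exactly as you describe. The verification you flag as needing care is precisely the part the paper spells out in detail, so nothing is missing.
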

\subsection{The structure of blocking ideals}
\label{sec:org42c145e}
The following description of blocking ideals provides a way of finding all
blocking ideals in a finite poset \(P\) with a fixed antichain \(a \parallel b\).
In the next section, we will apply it to partition posets to obtain a
\textbf{blocking expansion}, allowing us to calculate \(e(P; \alpha < \beta)\).
\begin{definition}
Let \(P\) be a finite poset and let \(\alpha,\beta \in P\) be two incomparable elements.
\begin{enumerate}
\item The \textbf{complete ideal} \(D_P(\alpha, \beta)\) is defined as
\begin{equation} \tag{COM}
\label{eq-com}
     D_P(\alpha,\beta) \triangleq
     P \setminus (\pof_P(\alpha) \cup \pof_P(\beta))
     = \{z \in P \, \mid \, z \not \ge \alpha, \, z \not \ge \beta\}.
\end{equation}
\item The \textbf{constant ideal} \(A_P(\alpha)\), also called the \emph{fixed part}, is the open
principal order ideal of \(\alpha\):
\begin{equation} \tag{FIX}
\label{eq-fix}
     A_P(\alpha) \triangleq \poi_P(\alpha) \setminus \set{\alpha} = \{z \in P \, \mid \, z < \alpha\}.
\end{equation}
\item The \textbf{variable part} \(G_P(\alpha,\beta)\) is
\begin{equation} \tag{VAR}
\label{eq-var}
  G_P(\alpha,\beta) \triangleq D_P(\alpha,\beta) \setminus A_P(\alpha)
  = \{z \in P \, \mid \, z \not \ge \alpha, z \not \ge \beta, z \not \le \alpha\}.
\end{equation}
\end{enumerate}
\label{def-complete-constant-variable}
\end{definition}

\begin{theorem}[Structure of blocking ideals]
Let \(P\) be a finite poset, and let \(a,b \in P\), \(a \parallel b\).
Then \(T \in J(P)\) is a blocking ideal
if and only if it fulfills the following equivalent conditions:

\begin{enumerate}
\item \(A_{P}(\alpha) \subseteq T \oiof D_{P}(\alpha,\beta)\),
(\(T\) is an order ideal in \(D_{P}(\alpha,\beta)\) containing \(A_{p}(\alpha)\))

\item \(T = A_{{P}}(\alpha) \cup V\) with \(V\) an order ideal in \(G_{P}(\alpha,\beta)\).
\end{enumerate}
\label{thm-blocking-structure}
\end{theorem}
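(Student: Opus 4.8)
The plan is to verify the two claims separately: first that $D_P(\alpha,\beta)$ is an order ideal, and then that membership in $\blocking{P}{\alpha}{\beta}$ is equivalent to each of conditions (1) and (2). The hinge of everything is the observation that for an order ideal $T$ not containing $\alpha$, the condition $T \cup \{\alpha\} \in J(P)$ is equivalent to $A_P(\alpha) \subseteq T$; this is the only nonroutine point, and I discuss it below.

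First I would show $D_P(\alpha,\beta) = P \setminus (\pof_P(\alpha) \cup \pof_P(\beta))$ is an order ideal. A union of two order filters is an order filter, and the complement of an order filter in $P$ is an order ideal; since $\pof_P(\alpha)$ and $\pof_P(\beta)$ are principal filters, this is immediate. Concretely: if $z \in D_P(\alpha,\beta)$ and $y \le z$, then $y \ge \alpha$ would force $z \ge \alpha$, contradiction, and similarly $y \not\ge \beta$, so $y \in D_P(\alpha,\beta)$. Next I would establish the key lemma: for $T \in J(P)$ with $\alpha \notin T$, we have $T \cup \{\alpha\} \in J(P)$ if and only if $A_P(\alpha) \subseteq T$. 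For the forward direction, if $z < \alpha$ then $z \in T \cup \{\alpha\}$ by down-closure and $z \ne \alpha$, so $z \in T$. For the reverse, assume $A_P(\alpha) \subseteq T$ and check down-closure of $T \cup \{\alpha\}$: if $y \le x$ with $x \in T \cup \{\alpha\}$, then either $x \in T$, whence $y \in T$ since $T$ is an order ideal, or $x = \alpha$, whence $y \le \alpha$; if $y = \alpha$ we are done, and if $y < \alpha$ then $y \in A_P(\alpha) \subseteq T$.

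With this lemma in hand, I would unwind the definition of $\blocking{P}{\alpha}{\beta}$. A set $T$ lies in $\blocking{P}{\alpha}{\beta}$ iff $T \in J(P)$, $\alpha \notin T$, $\beta \notin T$, and $T \cup \{\alpha\} \in J(P)$. By the lemma the last two-plus-one conditions ($\alpha \notin T$, $T \cup \{\alpha\} \in J(P)$) are together equivalent to ($\alpha \notin T$ and $A_P(\alpha) \subseteq T$); combined with $\beta \notin T$ and the fact that $\alpha, \beta \notin T \oiof P$ says exactly $T \subseteq D_P(\alpha,\beta)$ — here I use that $T$ being an order ideal disjoint from $\{\alpha,\beta\}$ means no element of $T$ is $\ge \alpha$ (else $\alpha \in T$) and none is $\ge \beta$. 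Also, an order ideal of $P$ contained in $D_P(\alpha,\beta)$ is the same thing as an order ideal of $D_P(\alpha,\beta)$ (using that $D_P(\alpha,\beta)$ is itself an order ideal of $P$, exactly as in step (1) of the proof of Corollary \ref{cor-refine}). This gives condition (1): $A_P(\alpha) \subseteq T \oiof D_P(\alpha,\beta)$. Note $A_P(\alpha) \subseteq D_P(\alpha,\beta)$ automatically, since $z < \alpha$ forces $z \not\ge \alpha$, and $z \ge \beta$ together with $z < \alpha$ would make $\alpha, \beta$ comparable — contradiction; so condition (1) is not vacuous.

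Finally, for the equivalence of (1) and (2), I would use the decomposition $D_P(\alpha,\beta) = A_P(\alpha) \sqcup G_P(\alpha,\beta)$, which is a disjoint union by the definition of $G_P(\alpha,\beta)$, and moreover $A_P(\alpha)$ is an order ideal of $D_P(\alpha,\beta)$ while $G_P(\alpha,\beta) = D_P(\alpha,\beta) \setminus A_P(\alpha)$ is the complementary order filter — because $A_P(\alpha)$, being the set of elements $< \alpha$, is down-closed within $P$. Given (1), write $V := T \setminus A_P(\alpha) \subseteq G_P(\alpha,\beta)$; since $T \oiof D_P(\alpha,\beta)$ and $A_P(\alpha)$ is an order ideal of $D_P(\alpha,\beta)$ contained in $T$, the set $V$ is an order ideal of $G_P(\alpha,\beta)$ (this is the relative-complement argument already used in the bijection $\Xi$ of Corollary \ref{cor-refine}, applied inside $D_P(\alpha,\beta)$), and $T = A_P(\alpha) \cup V$. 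Conversely, given (2) with $V \oiof G_P(\alpha,\beta)$, the set $T = A_P(\alpha) \cup V$ is an order ideal of $D_P(\alpha,\beta)$ containing $A_P(\alpha)$: down-closure is checked by cases $x \in A_P(\alpha)$ (then $y \in A_P(\alpha)$) or $x \in V$ (then $y \le x$ with $x \in G_P(\alpha,\beta)$ lands in $A_P(\alpha)$ if $y < \alpha$, and in $V$ otherwise since $V$ is down-closed in $G_P(\alpha,\beta)$). This closes the loop. The main obstacle, such as it is, is purely bookkeeping: keeping straight which ambient poset each "order ideal" is taken in and reusing the relative-complement correspondence already proved for Corollary \ref{cor-refine} rather than reproving it.
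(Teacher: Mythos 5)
Your proposal is correct and follows essentially the same route as the paper's proof: a direct check that $D_P(\alpha,\beta)$ is an order ideal, the observation that for an order ideal $T$ not containing $\alpha$ the condition $T\cup\{\alpha\}\in J(P)$ is equivalent to $A_P(\alpha)\subseteq T$, and the decomposition $D_P(\alpha,\beta)=A_P(\alpha)\sqcup G_P(\alpha,\beta)$ with $V=T\setminus A_P(\alpha)$ for the equivalence of (1) and (2). If anything, your case analysis in the direction (2) $\Rightarrow$ (1) (splitting $y\le x$, $x\in V$ into $y\in A_P(\alpha)$ versus $y\in G_P(\alpha,\beta)$) is slightly more careful than the paper's corresponding line.
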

\section{Partition posets}
\label{sec:orgd4aa562}
\subsection{Blocking ideals for partition posets.}
\label{sec:org54a2185}
Let \(\lambda\) be a partition and \(P=P_{\lambda}\) its associated cell poset;
suppose that \(\alpha, \beta  \in P_{\lambda}\), with \(\antichain{\alpha}{\beta}\).
Then linear extensions of \(P_{\lambda}\) correspond to SYT on \(\lambda\),
and linear extensions of \(P_{\lambda}\) that place \(\alpha\) before \(\beta\)
correspond to SYT on \(\lambda\) with lower value at cell \(\alpha\) than at cell \(\beta\).
Blocking ideals \(T \in \blocking{P}{\alpha}{\beta}\) are subpartitions  \(T \subset \alpha\).
Subsets of \(P\) of the form \(P \setminus (T \cup \{\alpha\})\) correspond to  skew partitions.

The fixed part \(A_{P}(\alpha)\) is a subpartition of \(\lambda\),
and so is the  complete ideal \(D_{P}(\alpha)\); the variable part \(G_P(\alpha,\beta)\) is not a subpartition.

A blocking ideal \(T\)  contains all of the fixed part, and some of the variable
part, and is a subpartition of the complete ideal.

\begin{figure}[htbp]
\centering
\includegraphics[width=0.25\textwidth]{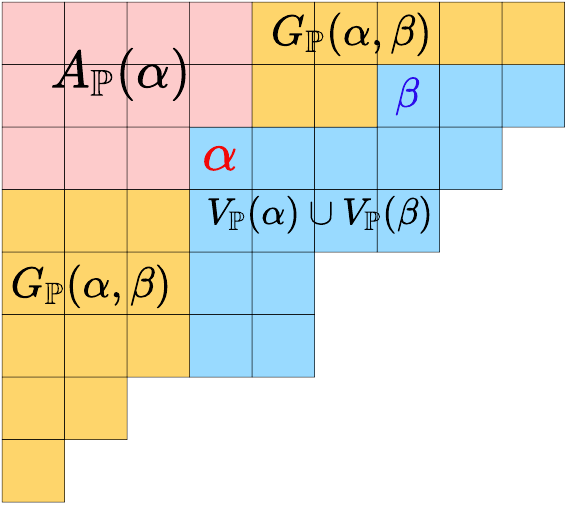}
\caption{\label{fig-part}The complete ideal, fixed part, and variable part of a partition poset \(P=P_\lambda\) with \(\lambda = (9,9,8,7,5,5,2,1)\), \(\alpha = (3,4)\) and \(\beta = (2,7)\). \(P_\lambda\) is partitioned into three disjoint parts. The complete ideal \(D_{P}(\alpha,\beta)\) equals \(A_P(\alpha) \cup G_P(\alpha,\beta)\) which equals \(P_\lambda \setminus (\pof_{P}(\alpha) \cup \pof_{}(\beta))\).}
\end{figure}

\begin{definition}
Let \(\lambda\) be a (number) partition, and \(P_\lambda\) the corresponding cell poset.
We define the \textbf{decorated tableau} of \(\lambda\) as the tableau with shape \(\lambda\),
where the cells are marked as follows:
fixed part:  'F', variable part: 'V', \(\alpha\): '1', \(\beta\): '2', remaining cells: 'o'.
\label{def-decorated}
\end{definition}

\begin{mexample}
Let \(\lambda = (4,3,3)\) and put \(P=P_{\lambda}\).  Let \(\alpha=(2,3)\), \(\beta=(3,2)\).

The decorated tableau is
\phantomsection
\label{}
\ytableausetup{nosmalltableaux}
\begin{center}
\begin{ytableau}
*(yellow) F & *(yellow) F & *(yellow) F & *(gray) V \\ 
*(yellow) F & *(yellow) F & *(red) 1 \\ 
*(gray) V & *(green) 2 & *(cyan) o
\end{ytableau}
\end{center}
\ytableausetup{smalltableaux}

From this representation, we immediately get the blocking ideals, which are
all possible combinations of ``all F + an order ideal of V''. These blocking
ideals are subpartitions of \(\lambda\).

\phantomsection
\label{table-dectab-more}
\begin{center} 
 \begin{ytableau}
X & X & X & X \\ 
X & X & O \\ 
X & O & O
\end{ytableau} 
 \begin{ytableau}
X & X & X & X \\ 
X & X & O \\ 
O & O & O
\end{ytableau} 
 \begin{ytableau}
X & X & X & O \\ 
X & X & O \\ 
O & O & O
\end{ytableau} 
 \begin{ytableau}
X & X & X & O \\ 
X & X & O \\ 
X & O & O
\end{ytableau} 
 \end{center}

The set of SYT's on \(\lambda\) with lower value in \(\alpha\) than in \(\beta\) is the disjoint union of
SYT's which extend a particular blocking partition. For instance, there are precisely
\(5 \times 4 = 20\) SYT extending the third blocking partition \(T\) in the above list, and they
can be ``spliced together'' from the SYT's on \(T\) and on  \(\{\alpha\}\) and on
\(\lambda \setminus (T \cup \{\alpha\})\).
Note that the four skew partitions \(\lambda \setminus (T \cup \{\alpha\}\) are disconnected.
In detail, the 20 SYT are obtained by choosing
first: one of 
\begin{displaymath}
{\def\lr#1{\multicolumn{1}{|@{\hspace{.6ex}}c@{\hspace{.6ex}}|}{\raisebox{-.3ex}{$#1$}}}
\raisebox{-.6ex}{$\begin{array}[b]{*{3}c}\cline{1-3}
\lr{1}&\lr{3}&\lr{5}\\\cline{1-3}
\lr{2}&\lr{4}\\\cline{1-2}
\end{array}$}
}
\quad 
{\def\lr#1{\multicolumn{1}{|@{\hspace{.6ex}}c@{\hspace{.6ex}}|}{\raisebox{-.3ex}{$#1$}}}
\raisebox{-.6ex}{$\begin{array}[b]{*{3}c}\cline{1-3}
\lr{1}&\lr{2}&\lr{5}\\\cline{1-3}
\lr{3}&\lr{4}\\\cline{1-2}
\end{array}$}
}
\quad 
{\def\lr#1{\multicolumn{1}{|@{\hspace{.6ex}}c@{\hspace{.6ex}}|}{\raisebox{-.3ex}{$#1$}}}
\raisebox{-.6ex}{$\begin{array}[b]{*{3}c}\cline{1-3}
\lr{1}&\lr{3}&\lr{4}\\\cline{1-3}
\lr{2}&\lr{5}\\\cline{1-2}
\end{array}$}
}
\quad 
{\def\lr#1{\multicolumn{1}{|@{\hspace{.6ex}}c@{\hspace{.6ex}}|}{\raisebox{-.3ex}{$#1$}}}
\raisebox{-.6ex}{$\begin{array}[b]{*{3}c}\cline{1-3}
\lr{1}&\lr{2}&\lr{4}\\\cline{1-3}
\lr{3}&\lr{5}\\\cline{1-2}
\end{array}$}
}
\quad 
{\def\lr#1{\multicolumn{1}{|@{\hspace{.6ex}}c@{\hspace{.6ex}}|}{\raisebox{-.3ex}{$#1$}}}
\raisebox{-.6ex}{$\begin{array}[b]{*{3}c}\cline{1-3}
\lr{1}&\lr{2}&\lr{3}\\\cline{1-3}
\lr{4}&\lr{5}\\\cline{1-2}
\end{array}$}
}
\quad 
,\end{displaymath}
then the cell corresponding to \(\alpha\):
\begin{displaymath}
{\def\lr#1{\multicolumn{1}{|@{\hspace{.6ex}}c@{\hspace{.6ex}}|}{\raisebox{-.3ex}{$#1$}}}
\raisebox{-.6ex}{$\begin{array}[b]{*{1}c}\cline{1-1}
\lr{6}\\\cline{1-1}
\end{array}$}
}
,\end{displaymath}
finally: one of 
\begin{displaymath}
{\def\lr#1{\multicolumn{1}{|@{\hspace{.6ex}}c@{\hspace{.6ex}}|}{\raisebox{-.3ex}{$#1$}}}
\raisebox{-.6ex}{$\begin{array}[b]{*{4}c}\cline{4-4}
&&&\lr{8}\\\cline{4-4}
&&\\\cline{1-3}
\lr{7}&\lr{9}&\lr{10}\\\cline{1-3}
\end{array}$}
}
\quad 
{\def\lr#1{\multicolumn{1}{|@{\hspace{.6ex}}c@{\hspace{.6ex}}|}{\raisebox{-.3ex}{$#1$}}}
\raisebox{-.6ex}{$\begin{array}[b]{*{4}c}\cline{4-4}
&&&\lr{9}\\\cline{4-4}
&&\\\cline{1-3}
\lr{7}&\lr{8}&\lr{10}\\\cline{1-3}
\end{array}$}
}
\quad 
{\def\lr#1{\multicolumn{1}{|@{\hspace{.6ex}}c@{\hspace{.6ex}}|}{\raisebox{-.3ex}{$#1$}}}
\raisebox{-.6ex}{$\begin{array}[b]{*{4}c}\cline{4-4}
&&&\lr{10}\\\cline{4-4}
&&\\\cline{1-3}
\lr{7}&\lr{8}&\lr{9}\\\cline{1-3}
\end{array}$}
}
\quad 
{\def\lr#1{\multicolumn{1}{|@{\hspace{.6ex}}c@{\hspace{.6ex}}|}{\raisebox{-.3ex}{$#1$}}}
\raisebox{-.6ex}{$\begin{array}[b]{*{4}c}\cline{4-4}
&&&\lr{7}\\\cline{4-4}
&&\\\cline{1-3}
\lr{8}&\lr{9}&\lr{10}\\\cline{1-3}
\end{array}$}
}
\quad 
.\end{displaymath}
\label{example-lam-small}
\end{mexample}

\begin{remark}
This example was computed using SageMath \autocite{sagemath} code developed
in the preparation of \autocite{jaldevik-thesis}. The calculations in the examples to come, 
and figures and tables, were also produced using this code, which is available in the ancillary folder in the arXiv submission. We also made use of the symbolic computation
capabilities of Sympy \autocite{meurer2017sympy} and Maxima \autocite{maxima} to
simplify binomial expressions and to calculate limits of such expressions.
\end{remark}
\subsection{Blocking expansion for partition posets}
\label{sec:org98708f3}

\begin{definition}
Let \(\lambda=(\lambda_{1},\dots,\lambda_{k})\) be a partition and \(P_{\lambda}\) its cell poset. We define
\(\fla := e(P_{\lambda})\). When \(\mu \subset \lambda\), i.e. \(\mu\) is a subpartition of \(\lambda\),
we denote by \(\lamu\) the skew partition whose cells
are the set theoretic difference of the cells in \(\lambda\) and the cells in \(\mu\).
By definition, \(P_{\lamu}\) is the cell poset on \(\lamu\),  and \(\flamu = e(P_{{\lamu}})\).
\end{definition}

The quantities \(\fla\) and \(\flamu\) can be interpreted as
\begin{enumerate}
\item the number of SYT on \(\lambda\) (on \(\lamu\)),
\item the number of saturated chains in Young's lattice starting at \(\emptyset\) and ending
at \(\lambda\) (starting at \(\mu\) and ending at \(\lambda\)),
\item the number of lattice paths in \(\NN^{k}\) starting at the origin and ending in \(\lambda\),
staying inside the simplex \(x_{1} \ge x_{2} \ge \cdots \ge x_{k}\),
\item the \(\CC\)-vector space dimension of the Specht module \(S^{\lambda}\);
when \(\lambda\) ranges over all
partitions of weight \(n\) these modules form a complete list of irreducible representations
of \(S_{n}\), the symmetric group on \(n\) letters.
\end{enumerate}

There are several explicit formulae for \(\fla\) and \(\flamu\).
For our considerations, the most useful ones are
\begin{itemize}
\item the ``hook length formula'' for \(\fla\) by Frame, Robinson, and Thrall \autocite{hook-length-formula}
(see also \autocite{stanley-enum2}),
\item the Jacobi-Trudi-Aitken formula for \(\fla\) and \(\flamu\) \autocite{aitken}.
\end{itemize}

We will recall these formulae in the next subsection.  
\begin{remark}
In an earlier version of this manuscript we made use of the
``excited diagrams'' of Naruse, which
allows the computation of \(\flamu\) from hook-length data on \(\lamu\)
(see \autocite{naruse-excited} and also
\autocite{konvalinka2018bijectiveproofhooklengthformula}\autocite{Morales_2018}\autocite{Morales_2017}\autocite{morales2020hookformulasskewshapes}). Another option is to use lattice path
combinatorics, see for instance \autocite{lattice-path-enum}. Since we can make do with
just Jacobi-Aitken-Trudi, we have omitted this exposition. 
\end{remark}

For partition posets, Corollary \ref{cor-refine} becomes:
\begin{corollary}
Let \(\lambda\) be a partition, and \(P=P_{\lambda}\) the corresponding partition poset. Let \(\alpha, \beta \in P\)
be two incomparable elements. Put \(\mathcal{B} = \blocking{P}{\alpha}{\beta}\). Then
\begin{equation}\tag{BE}
\label{eq:e-part-refine}
e(P; \alpha < \beta) = \sum_{T \in \mathcal{B}} f^{T} f^{\lambda / (T \cup \set{\alpha}).}
\end{equation}
Consequently,
\begin{equation}
\label{eq:e-prob-refine}
\Prob(P; \alpha < \beta) =  \frac{e(P; \alpha < \beta)}{e(P)} = \sum_{T \in \mathcal{B}} f^{T} \frac{f^{\lambda / (T \cup \set{\alpha})}}{\fla}
\end{equation}
\label{cor-part-refine}
\end{corollary}

We call (\ref{eq:e-part-refine}) the \textbf{blocking-expansion} of \(e(P; \alpha < \beta)\).

\begin{mexample}
We continue example \ref{example-lam-small}, with \(P=P_\lambda\), \(\lambda=(4,3,3)\), \(\alpha=(2,3)\), \(\beta=(3,2)\).
We have that \(e(P) = \fla =\) \texttt{210}.
The ``blocking expansion'' for \(e(P; \alpha < \beta)\) is
\begin{displaymath}
f^{(4,2,1)}f^{(4,3,3) / (4,3,1)} + f^{(4,2)}f^{(4,3,3) / (4,3)} + f^{(3,2)}f^{(4,3,3) / (3,3)} + f^{(3,2,1)}f^{(4,3,3) / (3,3,1)}
.
\end{displaymath}
We can ``reduce'' some skew partitions \(L /M\) by shifting them
to the left to obtain \(\tilde{L} / \tilde{M}\) with \(f^{L / M} = f^{\tilde{L} / \tilde{M}}\).
This is the case when the entire first columns of the skew partition is empty
(see Lemma \ref{lemma-reduce} and example \ref{example-reduce}).
Doing this, we obtain
\begin{displaymath}
f^{(4,2,1)}f^{(3,2,2) / (3,2)} + f^{(4,2)}f^{(4,3,3) / (4,3)} + f^{(3,2)}f^{(4,3,3) / (3,3)} + f^{(3,2,1)}f^{(3,2,2) / (2,2)}
,
\end{displaymath}
which evaluates to
\begin{displaymath}
35 * 1 + 9 * 1 + 5 * 4 + 16 * 3
=
112.
\end{displaymath}
The probability \(\Prob(P; \alpha < \beta) = e(P; \alpha < \beta)/e(P)\) is therefore
8/15.
\end{mexample}
\subsection{The hook-length formula by Frame, Robinson, and Thrall}
\label{sec:org767484c}
\begin{theorem}[Hook-length formula]
The number of SYT of shape \(\lambda=(\lambda_{1}, \dots, \lambda_{k})\) is
\begin{equation}
\label{eq-HL}
\fla = \frac{|\lambda|!}{\prod_{c \in \lambda}{h_\lambda(c)}} \tag{HL}
\end{equation}
where \(h_\lambda(c)\) is the \textit{hook length}
of the cell \(c=(i,j)\), given by
the number of cells \(d = (s, t) \in \lambda\) such that
(\(i = s\)  and \(j \le t\)) or  (\(j = t\) and  \(i \le s\)).
\label{thm-hook-length}
\end{theorem}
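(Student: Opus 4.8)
The plan is to prove the hook-length formula by the probabilistic argument of Greene, Nijenhuis, and Wilf, which is the cleanest self-contained route and meshes well with the linear-extension viewpoint already developed in the paper. Recall that $\fla = e(P_\lambda)$ counts linear extensions of the cell poset, equivalently SYT of shape $\lambda$; I would state the target as $\fla \prod_{c\in\lambda} h_\lambda(c) = |\lambda|!$ and induct on $n = |\lambda|$, the base case $n=0$ (or $n=1$) being trivial. Since removing a corner (a cell $c$ with no cell below or to the right) from $\lambda$ yields a partition $\lambda^{(c)}$ of $n-1$, and since a SYT of shape $\lambda$ has its entry $n$ in exactly one corner, we get the recurrence $\fla = \sum_{c \text{ corner of }\lambda} f^{\lambda^{(c)}}$. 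Applying the inductive hypothesis to each $f^{\lambda^{(c)}}$ reduces everything to the purely combinatorial identity
\[
\sum_{c \text{ corner}} \ \prod_{d\in\lambda}\frac{h_{\lambda^{(c)}}(d)}{h_\lambda(d)} \;=\; 1,
\]
where for $d = c$ the factor is interpreted via $h_\lambda(c) = 1$ in the denominator of the full formula; more precisely, one shows $\sum_{c} \frac{(n-1)!\,/\prod_{d\in\lambda^{(c)}}h_{\lambda^{(c)}}(d)}{\,n!\,/\prod_{d\in\lambda}h_\lambda(d)} = 1$.

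To prove that identity I would run the GNW ``hook walk'': pick a uniformly random cell of $\lambda$, then repeatedly jump to a uniformly random other cell in the current cell's hook (same row to the right, or same column below), stopping when a corner is reached. The key lemma is that the probability of terminating at a given corner $c = (a,b)$ equals exactly
\[
p(c) \;=\; \prod_{i=1}^{a-1}\frac{h_\lambda(i,b)}{h_\lambda(i,b)-1}\cdot\prod_{j=1}^{b-1}\frac{h_\lambda(a,j)}{h_\lambda(a,j)-1}\cdot\frac{1}{n},
\]
and a short manipulation of hook lengths (every hook length $h_\lambda(d)$ with $d$ in row $a$ to the left of $c$, or column $b$ above $c$, drops by exactly one upon deleting $c$, while all other hook lengths are unchanged) shows this equals precisely the summand $\frac{(n-1)!/\prod h_{\lambda^{(c)}}}{n!/\prod h_\lambda}$. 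Since the walk terminates at some corner with probability $1$, summing $p(c)$ over corners gives $1$, which is the identity we need. To establish the lemma one conditions on the first step and sets up an inner induction on the ``rectangle'' spanned between a random interior starting cell and the target corner $(a,b)$; the inductive step expresses $p(c)$ as a sum over possible penultimate cells lying in row $a$ or column $b$, and the telescoping of the products $\prod \frac{h}{h-1}$ does the rest.

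The main obstacle is exactly this lemma — the exact formula for the corner-hitting probability of the hook walk — and in particular the bookkeeping in its inductive proof: one must carefully identify, for a corner $c=(a,b)$, which cells $d$ have $h_{\lambda^{(c)}}(d) = h_\lambda(d) - 1$ (namely those with $d$ weakly left of $c$ in row $a$ or weakly above $c$ in column $b$, excluding $c$ itself) versus $h_{\lambda^{(c)}}(d) = h_\lambda(d)$ (all others), and then match the resulting ratio of products against the conditional-probability recursion. Everything else is routine: the corner recurrence for $\fla$ is immediate from ``where does $n$ sit,'' and the reduction of the hook-length formula to the weighted corner sum is just algebra once the inductive hypothesis is in hand. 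I would also remark that alternative proofs exist — the Frame–Robinson–Thrall original, the Novelli–Pak–Stoyanovskii bijection, or deducing it as a special case of the Jacobi–Trudi/Aitken determinant stated next — but the GNW probabilistic proof is the shortest to write in full and is the one I would present.
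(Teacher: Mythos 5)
The paper offers no proof of this theorem at all: it is quoted as the classical result of Frame, Robinson and Thrall, with the reader referred to the original article and to Stanley, so there is no internal argument to compare yours against. Your proposed route — the Greene–Nijenhuis–Wilf probabilistic proof — is a correct, standard and genuinely self-contained alternative to citation, and it fits the paper's linear-extension viewpoint nicely since the corner recurrence \(\fla=\sum_{c}f^{\lambda^{(c)}}\) is just the statement that a saturated chain in Young's lattice ending at \(\lambda\) passes through exactly one \(\lambda^{(c)}\). Your reduction to the weighted corner identity and your bookkeeping of which hook lengths drop by one when a corner \((a,b)\) is deleted (those in row \(a\) to its left and column \(b\) above it, with \(h_\lambda(a,b)=1\) accounting for the deleted cell) are exactly right, and the stated formula for the corner-hitting probability \(p(c)\) is the correct GNW lemma. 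The one caveat is that this lemma is the entire content of the proof and you only sketch it: the actual GNW induction does not proceed by ``summing over the penultimate cell'' (which also sits oddly next to your earlier statement that you condition on the \emph{first} step); rather, one conditions on the first step and then sums over all possible sets of row indices and column indices that the walk visits in column \(b\) and row \(a\), the product \(\prod\frac{h}{h-1}=\prod\bigl(1+\frac{1}{h-1}\bigr)\) arising from expanding this product as a sum over subsets rather than from a telescope. So the plan is sound and would yield a complete proof, but as written the crucial lemma is asserted with only an impressionistic indication of its (somewhat delicate) inductive proof.
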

The hook-length can also be expressed as
\(h_{\lambda}(c) = \lambda_{i} - j + \lambda'_{j} - i + 1\) where \(\lambda'\) is the \textbf{conjugate partition} of \(\lambda\).
\subsection{The Jacobi-Trudi-Aitken formula}
\label{sec:org4c41f5b}
\begin{theorem}[Jacobi-Trudi-Aitken]
The total number of standard skew tableaux of shape \(\lambda / \mu\) where
\(\lambda = (\lambda_1, \lambda_2,\dots,\lambda_n)\) and
\(\mu = (\mu_1,\mu_2,\dots,\mu_k)\) equals
\begin{equation}
\label{eq:aitken}
\flamu = |\lambda / \mu|!
\cdot \mathrm{det}(g(\lambda_i - \mu_j - i + j))_{i,j = 1}^n \tag{JTA}
\end{equation}
where
\begin{displaymath}
g(m) =
    \begin{cases}
        0 \text{ if } m < 0, \\
        \frac{1}{m!} \text{ otherwise.}
    \end{cases}
\end{displaymath}
\label{thm-aitken}
\end{theorem}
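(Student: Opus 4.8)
I would prove this by reducing the number of standard skew tableaux to a count of lattice walks confined to a Weyl chamber and then evaluating that count with the reflection principle. Set $N=\lvert\lamu\rvert$, let $n=\lenpart{\lambda}$, and pad $\mu$ with zeros to length $n$ (if $\mu\not\subseteq\lambda$ both sides vanish, so assume $\mu\subseteq\lambda$). Since a standard skew tableau of shape $\lamu$ is the same datum as a saturated chain $\mu=\nu^{(0)}\subset\nu^{(1)}\subset\cdots\subset\nu^{(N)}=\lambda$ in Young's lattice, I record for the $k$-th step the index $w_k\in\{1,\dots,n\}$ of the row receiving the new cell and put $x_r(k)=\mu_r-r+\#\{k'\le k:\ w_{k'}=r\}$. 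Then $\nu^{(k)}_r=x_r(k)+r$, so requiring each $\nu^{(k)}$ to be a partition is exactly requiring $\mathbf x(k)=(x_1(k),\dots,x_n(k))$ to lie in the open chamber $\{x_1>x_2>\cdots>x_n\}$ for every $k$; the two constructions are mutually inverse. Hence $\flamu$ equals the number of lattice walks in $\ZZ^n$ from $u=(\mu_i-i)_{i=1}^n$ to $v=(\lambda_i-i)_{i=1}^n$ that use, at each of their $N$ steps, one of the standard basis vectors $e_1,\dots,e_n$ and never leave that open chamber. (Tracking the unshifted point $\nu^{(k)}$ instead recovers the picture from the Introduction — walks from $\mathbf 0$ to $\lambda$ inside $\{x_1\ge\cdots\ge x_n\ge 0\}$ — and the shift by $(-1,\dots,-n)$ turns the weak inequalities into strict ones while rendering the floor $x_n=0$ inert, since the steps only increase coordinates.)

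Next I would count these confined walks by the reflection principle for walks in a Weyl chamber (Gessel–Zeilberger), a form of the Lindström–Gessel–Viennot lemma. The symmetric group $S_n$ acts on $\ZZ^n$ by permuting coordinates, with the open chamber as fundamental domain and walls the hyperplanes $H_i=\{x_i=x_{i+1}\}$; write $s_i=(i\ i{+}1)$ for the reflection in $H_i$. Crucially both endpoints $u$ and $v$ lie strictly inside the chamber because $\mu$ and $\lambda$ are partitions. One then sets up a sign-reversing involution on $\bigsqcup_{\sigma\in S_n}\{\sigma\}\times\{\text{lattice walks }\sigma u\to v\}$: a walk that never meets a wall is a fixed point (which forces $\sigma=\mathrm{id}$, so the fixed points are precisely the chamber walks $u\to v$), while a walk that does meet a wall has its initial segment reflected across the first wall $H_i$ it touches, pairing $(\sigma,w)$ with the resulting $(s_i\sigma,w')$, of opposite sign. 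Making this precise in dimension greater than one needs some care, since a walk can touch two walls simultaneously and one must reflect by a rule that is genuinely involutive — but that is exactly the content of the cited reflection lemma, which I would invoke rather than reprove. It gives
\[
\flamu=\sum_{\sigma\in S_n}\operatorname{sgn}(\sigma)\,\bigl\lvert\{\text{lattice walks from }\sigma u\text{ to }v\text{ with steps in }\{e_1,\dots,e_n\}\}\bigr\rvert.
\]

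To finish, an unrestricted lattice walk from $\sigma u$ to $v$ must take $v_i-(\sigma u)_i=\lambda_i-\mu_{\sigma^{-1}(i)}-i+\sigma^{-1}(i)$ steps of type $e_i$, and $\sum_i\bigl(v_i-(\sigma u)_i\bigr)=\lvert\lambda\rvert-\lvert\mu\rvert=N$ for every $\sigma$; so the number of such walks is the multinomial coefficient $N!\big/\prod_i\bigl(\lambda_i-\mu_{\sigma^{-1}(i)}-i+\sigma^{-1}(i)\bigr)!$ when all these arguments are nonnegative, and $0$ otherwise — i.e.\ exactly $N!\prod_i g\bigl(\lambda_i-\mu_{\sigma^{-1}(i)}-i+\sigma^{-1}(i)\bigr)$ under the stated convention $g(m)=0$ for $m<0$. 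Substituting $\sigma\mapsto\sigma^{-1}$ (which preserves the sign) and reading the resulting alternating sum as a determinant yields $\flamu=N!\det\bigl(g(\lambda_i-\mu_j-i+j)\bigr)_{i,j=1}^n$, the asserted formula. The reflection step is the only genuinely delicate point; everything else is bookkeeping. For readers comfortable with symmetric functions there is a two-line alternative: apply the exponential specialization $h_m\mapsto 1/m!$ to the skew Jacobi–Trudi identity $s_{\lamu}=\det\bigl(h_{\lambda_i-\mu_j-i+j}\bigr)$, using that this specialization sends $s_{\lamu}$ to $\flamu/N!$.
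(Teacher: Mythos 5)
The paper offers no proof of this theorem: it is stated as a classical result and attributed to Aitken by citation, so there is no in-house argument to compare yours against. Your proposal is a correct, self-contained route. The encoding of a standard skew tableau as a saturated chain in Young's lattice, and then as a lattice walk from $(\mu_i-i)_{i=1}^{n}$ to $(\lambda_i-i)_{i=1}^{n}$ with steps $e_1,\dots,e_n$ confined to the open chamber $x_1>\cdots>x_n$, is exactly right (and is essentially the lattice-path picture the paper sketches informally in its introduction for linear extensions of $P_\lambda$, shifted so that weak inequalities become strict and the floor constraint becomes vacuous). You verify the hypotheses of the Gessel--Zeilberger reflection lemma where they matter --- the step set is stable under permuting coordinates, integrality prevents a step from crossing a wall without landing on it, and both endpoints are interior, so the fixed points of the involution are precisely the confined walks from $u$ to $v$ --- and you are right to cite rather than reprove the involution itself, since making it well defined when several walls are met simultaneously is the only delicate point. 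The multinomial count of unrestricted walks, the observation that the coordinate increments sum to $N$ for every $\sigma$, and the reindexing $\sigma\mapsto\sigma^{-1}$ that assembles the signed sum into $\det\bigl(g(\lambda_i-\mu_j-i+j)\bigr)$ are all correct bookkeeping; the parenthetical claim that the determinant also vanishes when $\mu\not\subseteq\lambda$ is true (a zero block forces every permutation term to vanish) but is asserted rather than argued. The two-line alternative via the exponential specialization of the skew Jacobi--Trudi identity is likewise valid and is closer to how the result is usually derived in the sources the paper cites; either version would serve as a proof here.
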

\subsection{Reduction}
\label{sec:org1e91e70}

\begin{definition}
Suppose that \(\mu \subset \lambda\),
\(\lambda = (\lambda_{1},\dots,\lambda_{r})\), \(\mu = (\mu_{1},\dots,\mu_{r})\),
with \(\mu_{r}=k>0\).
Define \(\tilde{\lambda}\) and  \(\tilde{\mu}\),
the ``reduction'' of the \(\lambda\) and of   \(\mu\),
by
\begin{equation}
\tilde{\lambda} =(\lambda_{1}-k,\dots,\lambda_{r}-k), \qquad
\tilde{\mu} =(\mu_{1}-k,\dots,\mu_{r}-k).
\end{equation}
Of course, this will introduce trailing zeroes in \(\tilde{\mu}\), these
are ignored.

We analogously define the reduction of the skew partition \(\lamu\) as \(\lamur\).
\label{def-reduced}
\end{definition}

Obviously \(| \lambda / \mu | = |\tilde{\lambda} / \tilde{\mu}|\).
It is also true that for any cell \(c \in \lamur\),
\(h_{\lambda}(c) = h_{\lar}(c)\)
since the empty cells to the left does not influence the hook-lengths.

Our motivation for introducing this concept is the following:
\begin{lemma}
With notations as above,
\(\flamu = \flamur\).
\label{lemma-reduce}
\end{lemma}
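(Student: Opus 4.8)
The plan is to produce an explicit isomorphism of posets $P_{\lambda/\mu} \cong P_{\tilde{\lambda}/\tilde{\mu}}$ and then invoke the already-recorded fact that $e(Q) = \ell(J(Q))$ depends only on the isomorphism type of the finite poset $Q$; since $\flamu = e(P_{\lambda/\mu})$ and $\flamur = e(P_{\tilde{\lambda}/\tilde{\mu}})$ by definition, the equality follows at once.

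First I would note that, because $\mu$ is weakly decreasing with $\mu_{r} = k$, we have $\mu_{i} \ge k$ for every $i \le r$; hence every cell $(i,j)$ of the skew diagram $\lambda/\mu$ (so $\mu_{i} < j \le \lambda_{i}$) satisfies $j \ge \mu_{i}+1 \ge k+1$. Therefore the column translation $\tau\colon (i,j) \mapsto (i, j-k)$ is well defined on the cells of $\lambda/\mu$ and takes values in $\Nat^{2}$ with positive coordinates. Next I would verify that $\tau$ restricts to a bijection from the cells of $\lambda/\mu$ onto the cells of $\tilde{\lambda}/\tilde{\mu}$: the membership condition $\mu_{i} < j \le \lambda_{i}$ is equivalent to $\tilde{\mu}_{i} = \mu_{i}-k < j-k \le \lambda_{i}-k = \tilde{\lambda}_{i}$, which is exactly the condition for $(i, j-k)$ to be a cell of $\tilde{\lambda}/\tilde{\mu}$; the trailing zeros introduced in $\tilde{\mu}$ (and in $\tilde{\lambda}$, should $\lambda_{r}=k$) correspond to rows that are empty in both skew diagrams, so they cause no trouble. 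Since the order on cells is the coordinatewise order inherited from $\Nat^{2}$ (equivalently, from a chain product), and since $\tau$ together with its inverse $(i,j)\mapsto(i,j+k)$ are plainly order-preserving, $\tau$ is a poset isomorphism $P_{\lambda/\mu}\cong P_{\tilde{\lambda}/\tilde{\mu}}$, whence $\flamu = \flamur$.

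I do not anticipate a genuine obstacle here: the whole content is the observation that deleting the empty columns to the left of a skew shape is an invertible translation of its cell poset, and the only care required is the one-indexing bookkeeping and the harmless trailing zeros, both of which the write-up should dispatch in a sentence. If a purely formulaic argument is preferred, the same identity also drops out of the Jacobi--Trulli--Aitken formula~(\ref{eq:aitken})---each determinant entry $g(\lambda_{i}-\mu_{j}-i+j)$ and the factor $|\lambda/\mu|!$ are visibly invariant under $\lambda_{i}\mapsto\lambda_{i}-k$, $\mu_{j}\mapsto\mu_{j}-k$---or from Naruse's formula~(\ref{eq-EXC}) together with the hook-length invariance $h_{\lambda}(c)=h_{\tilde{\lambda}}(c)$ noted just above and the evident bijection between excited diagrams of $\lambda/\mu$ and of $\tilde{\lambda}/\tilde{\mu}$; but the poset-isomorphism route is the shortest and most conceptual, so that is the one I would write up.
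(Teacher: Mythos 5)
Your proof is correct and is essentially the paper's own argument made explicit: the paper simply asserts a bijection between standard tableaux on \(\lamu\) and on \(\lamur\) because ``the empty cells to the left exert no influence,'' and your column-translation \(\tau\colon (i,j)\mapsto(i,j-k)\) is exactly that bijection, promoted to a poset isomorphism and checked carefully. The extra bookkeeping you supply (and the alternative JTA/Naruse routes) goes beyond what the paper writes down, but the underlying idea is the same.
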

\begin{proof}
There is a bijection between standard tableaux on \(\lamu\) and on \(\lamur\),
since the empty cells to the left exerts no influence.
\end{proof}

\begin{mexample}
If \(\lambda = (4,3,2)\), \(\mu=(2,1,1)\) then
\(\tilde{\lambda} = (3,2,1)\), \(\tilde{\mu} = (1,0,0)=(1)\).

\phantomsection
\label{}
\begin{displaymath}
\begin{ytableau}
X & X & O & O \\
X & O & O \\
X & O
\end{ytableau}
 \qquad
\begin{ytableau}
X & O & O \\
O & O \\
O
\end{ytableau}
\end{displaymath}

The corresponding hook lengths (for the cells in \(\lamu\) and \(\lamur\)) are the same:
\phantomsection
\label{}
\begin{displaymath}
\begin{ytableau}
6 & 5 & 3 & 1 \\
4 & 3 & 1 \\
2 & 1
\end{ytableau}
 \qquad
\begin{ytableau}
5 & 3 & 1 \\
3 & 1 \\
1
\end{ytableau}
\end{displaymath}
\label{example-reduce}
\end{mexample}
\section{Partitions with two rows}
\label{sec:org23dbf24}
\subsection{Warmup: Catalan poset, \(\alpha\) and \(\beta\) touching}
\label{sec:orgbf84b38}
We start with the case that was treated in \autocite{jaldevik-thesis},
and which was also previously done in \autocite{olson20181}.
Let \(\lambda=(n,n)\), and put \(P=P_\lambda\). Let \(\alpha=(1,b+1)\), \(\beta=(2,b)\).
In order to adhere to the treatment in \autocite{jaldevik-thesis}, and beacuse in this
specific case the blocking expansion for \(\beta < \alpha\) is simpler than that for
\(\alpha < \beta\), we will calculate \(\Prob(P; \alpha < \beta)\) as \(1 -\Prob(P; \beta < \alpha)\),
and thus \(e(P; \alpha < \beta) = e(P) -e(P; \beta < \alpha)\).

The ``decorated Tableau'' will hence be as follows (shown here for \(b=3\)):

\ytableausetup{nosmalltableaux}
\begin{center}
\begin{ytableau}
*(yellow) F & *(yellow) F & *(yellow) F & *(green) 2 & *(cyan) o \\ 
*(yellow) F & *(yellow) F & *(red) 1 & *(cyan) o & *(cyan) o
\end{ytableau}
.\end{center}
\ytableausetup{smalltableaux}

There is no variable part, and the fixed part is the subpartition \((b,b-1)\).
The blocking expansion becomes
\begin{equation}
\label{eq-cat1}
  \begin{split}
    e(P; \beta < \alpha))
    & = f^{(b,b-1)} f^{(n,n) / (b,b)} \\
    & = f^{(b,b-1)} f^{(n-b,n-b) / (0,0)} \\
    & = f^{(b,b-1)} f^{(n-b,n-b)}.
  \end{split}
\end{equation}

It is well-known (see for instance \autocite{stanley-enum2}) that for \(\lambda =(n,n)\),
\(\fla = C_{n} = (n+1)^{-1}\binom{2n}{n}\), the famous Catalan number. It follows that
\begin{equation}
\label{eq-cat2}
  \begin{split}
  e(P; \beta < \alpha)) &= C_{b} C_{n-b,} \\
  \Prob(P; \beta < \alpha) &= \frac{C_{b} C_{n-b}}{C_{n}}.
  \end{split}
\end{equation}
This can be simplified. From the well-know recursion
\begin{equation}
\label{eq-cat2a}
C_{n+1} = \frac{2(2n+1)}{n+2} C_{n}, \quad C_{0}=1,
\end{equation}
we get that
\begin{equation}
\label{eq-cat2b}
C_{n-b} = 2^{-b} \prod_{k=0}^{b-1}\frac{n - k + 1}{2(n - k) - 1}C_{n}.
\end{equation}
This allows us to rewrite (\ref{eq-cat2}) as
\begin{equation}
\label{eq-cat3}
  \Prob({P};  \beta < \alpha) = \frac{C_{b} C_{n-b}}{C_{n}}
   =  2^{-b} C_{b} \prod_{k=0}^{b-1}\frac{n-k+1}{2(n - k) - 1},
\end{equation}
so
\begin{equation}
\label{eq-cat3inv}
  \Prob({P};  \alpha < \beta) = 1- \Prob({P};  \beta < \alpha)  =
  1- 2^{-b} C_{b}  \prod_{k=0}^{b-1}\frac{n-k+1}{2(n - k) -1}.
\end{equation}
We can calculate the limit probabilities for an ``infinite Catalan strip'' as
\begin{equation}
\label{eq-cat3lim}
  \begin{split}
    \lim_{n \to \infty}\Prob(P_{{(n,n)}}; \beta < \alpha) & =
            \lim_{n \to \infty} 2^{-b}  \prod_{k=0}^{b-1}{\frac{n-k+1}{2(n-k)-1}} C_b \\
            & = 2^{-b} C_b \prod_{k=0}^{b-1}{\lim_{n \to \infty}\frac{n-k+1}{2(n-k)-1}} \\
            & =2^{-b} C_b \prod_{k=0}^{b-1}{\lim_{n \to \infty}\frac{1 - \frac{k-1}{n}}{2 - \frac{2k+1}{n}}} \\
            & = 2^{-b} C_b \prod_{k=0}^{b-1}{\frac{1}{2}} \\
            & = 2^{-b} C_b \left( \frac{1}{2} \right)^b \\
            &= 4^{-b} C_b,
        \end{split}
\end{equation}
thus
\begin{equation}
\label{eq-cat3liminv}
    \lim_{n \to \infty}\Prob(P_{{(n,n)}}; \alpha < \beta)  = 1- 4^{-b} C_b.
\end{equation}

We plot \(\Prob(P_{{(n,n)}}; \alpha < \beta)\) for  \(2 \le b \le 6, b < n < 50\) in Figure \ref{limprob21}.
\begin{figure}[htbp]
\centering
\includegraphics[width=.9\linewidth]{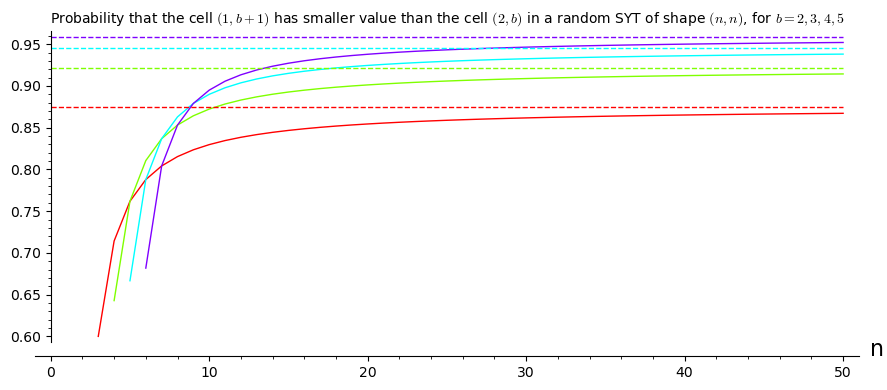}
\caption{\label{limprob21}Limit probabilities for \(\Prob(P_{{(n,n)}}; (1,b+1) < (2,b))\)}
\end{figure}
\subsection{Blocking expansion for partitions with two rows}
\label{sec:org81ce1d0}
When \(\lambda = (\lambda_1,\lambda_2)\), with \(\lambda_{2} > 0\), and \(\alpha,\beta\) incomparable elements in \(P=P_{\lambda}\),
we can assume that \(\alpha=(1,a)\), \(\beta=(2,b)\), with \(a > b\). The structure of
the blocking ideals \(\blocking{P_{\lambda}}{\alpha}{\beta}\) (and \(\blocking{P_{\lambda}}{\beta}{\alpha}\)) becomes very simple,
and the \(f^{T}\) and \(f^{\lambda / \mu}\), with \(\mu = T \cup \{\alpha\}\), are easy to calculate.

The situation becomes especially nice if we assume that \(\lambda_{2} \gg 0\), so that the right borders of
\(\lambda\) do not interact with \(\alpha\) or \(\beta\), like here:
\begin{displaymath}
\ydiagram[*(green) \bullet]
{5+1, 2+1}
*[*(white)]{10,7}
\end{displaymath}

It will be the situation we are in if we want
to consider limit, as \(\lambda=(\lambda_{1},\lambda_{2}) \to (\infty,\infty)\) in such a way that \(\lambda_{1} - \lambda_{2}\)
remains bounded, of the poset probability
\begin{equation}
\label{eq-swap-lim}
  \begin{split}
    \lim_{\lambda \to (\infty,\infty)}  \Prob(P_{\lambda}; \alpha < \beta)
    & = \lim_{\lambda \to (\infty,\infty)} \frac{e(P_{\lambda}; \alpha < \beta)}{e(P_{\lambda})} \\
    & = \lim_{\lambda \to (\infty,\infty)} \sum_{T \in \blocking{P}{\alpha}{\beta}} f^{T} \frac{f^{\lambda / (T \cup \set{\alpha})}}{\fla} \\
    &  = \sum_{T \in \blocking{P}{\alpha}{\beta}} f^{T} \lim_{\lambda \to (\infty,\infty) } \frac{f^{\lambda / (T \cup \set{\alpha})}}{\fla}
  \end{split}
\end{equation}
Here, we used equation (\ref{eq:e-prob-refine}).

\begin{mexample}
We study blocking ideals in \(P_\lambda\) when \(\lambda\)
and \((\alpha,\beta)\) are respectively

\begin{equation}
\begin{matrix}
 \lambda & \alpha & \mu & Diagram \\
\hline (7,6) & (1,5) & (2,3) & \begin{ytableau}*(white) & *(white) & *(white) & *(white) & *(red) \alpha  & *(white) & *(white) \\*(white) & *(white) & *(green) \beta  & *(white) & *(white) & *(white)\end{ytableau} \\
 (7,6) & (1,5) & (2,4) & \begin{ytableau} *(white) & *(white) & *(white) & *(white) & *(red) \alpha  & *(white) & *(white) \\*(white) & *(white) & *(white) & *(green) \beta  & *(white) & *(white) \end{ytableau} \\
 (7,6) & (1,5) & (2,1) & \begin{ytableau} *(white) & *(white) & *(white) & *(white) & *(red) \alpha  & *(white) & *(white) \\ *(green) \beta  & *(white) &  *(white) & *(white) & *(white) & *(white) \end{ytableau} \\
\end{matrix}
\label{tab-twr}
\end{equation}

For these cases, we study the situation with \(\alpha\) preceding \(\beta\),
calculating the ``decorated tableau''; the result is shown in Table \ref{tab:tworowdec}.

\phantomsection
\label{table-tworows-abeforeb-generic}
\begin{table}[h]
\begin{adjustwidth}{-0.1cm}{-0.1cm}
\label{table:tworows}
\caption{Partitions with two rows, \(\alpha\) in first row}
\label{tab:tworowdec}
\begin{center}
\begin{tabularx}{420pt}{|X|X|X|X|} \hline
\(\lambda, \alpha, \beta\) &
(7,6) , (1, 5) , (2, 3) & (7,6) , (1, 5) , (2, 4) &  (7,6) , (1, 5) , (2, 1)  \\ \hline Tableau & \begin{ytableau}
*(yellow) F & *(yellow) F & *(yellow) F & *(yellow) F & *(red) 1 & *(cyan) o & *(cyan) o \\ 
*(gray) V & *(gray) V & *(green) 2 & *(cyan) o & *(cyan) o & *(cyan) o
\end{ytableau} & \begin{ytableau}
*(yellow) F & *(yellow) F & *(yellow) F & *(yellow) F & *(red) 1 & *(cyan) o & *(cyan) o \\ 
*(gray) V & *(gray) V & *(gray) V & *(green) 2 & *(cyan) o & *(cyan) o
\end{ytableau} & \begin{ytableau}
*(yellow) F & *(yellow) F & *(yellow) F & *(yellow) F & *(red) 1 & *(cyan) o & *(cyan) o \\ 
*(green) 2 & *(cyan) o & *(cyan) o & *(cyan) o & *(cyan) o & *(cyan) o
\end{ytableau} 
  \\ \hline
Fixed part &
 (4) & (4) & (4) 
  \\ \hline
Complete ideal &
 (4,2) & (4,3) & (4) 
  \\ \hline
Blocking expansion &
 \begin{math}
f^{(4)}f^{(7,6) / (5)}  +  f^{(4,2)}f^{(7,6) / (5,2)}  +  f^{(4,1)}f^{(7,6) / (5,1)}
\end{math} & \begin{math}
f^{(4)}f^{(7,6) / (5)}  +  f^{(4,2)}f^{(7,6) / (5,2)}  +  f^{(4,3)}f^{(7,6) / (5,3)}  +  f^{(4,1)}f^{(7,6) / (5,1)}
\end{math} & \begin{math}
f^{(4)}f^{(7,6) / (5)}
\end{math} 
  \\ \hline
Reduced blocking expansion &
 \begin{math}
f^{(4)}f^{(7,6) / (5)}  +  f^{(4,2)}f^{(5,4) / (3)}  +  f^{(4,1)}f^{(6,5) / (4)}
\end{math} & \begin{math}
f^{(4)}f^{(7,6) / (5)}  +  f^{(4,2)}f^{(5,4) / (3)}  +  f^{(4,3)}f^{(4,3) / (2)}  +  f^{(4,1)}f^{(6,5) / (4)}
\end{math} & \begin{math}
f^{(4)}f^{(7,6) / (5)}
\end{math} 
  \\ \hline

\end{tabularx}
\end{center}
\end{adjustwidth}
\end{table}
\end{mexample}

Guided by these examples, we state the following theorem for \(P=P_{\lambda}\) with \(\lambda=(\lambda_{1},\lambda_{2})\).

\begin{theorem}
Let \(\lambda=(\lambda_{1},\lambda_{2})\) be a partition with two rows, and let \(P=P_{\lambda}\) be its cell poset.
Suppose that \(\alpha,\beta \in P_{\lambda}\) are incomparable. The \(\alpha\) and \(\beta\) belong to different rows,
and there are two cases.
When  \(\alpha=(1,a)\), \(\beta=(2,b)\) with \(1\le b<a \le \lambda_{1}\), then
\begin{align}
\label{eq-abeforebexp}
 A_{{P}}(\alpha) & = (a-1) \\
 G_{{P}}(\alpha,\beta) & = \{(2,j) \mid 1 \le j < b\} \\
 D_{{P}}(\alpha,\beta) & = (a-1,b-1) \\
 \blocking{P}{\alpha}{\beta} & = \{(a -1,t) \mid 0 \le t \le  b-1\} \\
 e(P;\alpha < \beta) & = \sum_{t=0}^{b-1} f^{(a-1,t)} f^{\lambda / (a,t)} \\
 & = f^{(a-1)} f^{\lambda / (a)} + \sum_{t=1}^{b-1} f^{(a-1,t)} f^{(\lambda_{1}-t,\lambda_{2}-t) / (a-t)}
 \end{align}

If \(\alpha=(2,a)\), \(\beta=(1,b)\) with \(1\le a < b \le \lambda_{1}\), then
\begin{align}
\label{eq-bbeforeaexp}
    A_{{P}}(\alpha)  &= (a,a-1) \\
    G_{{P}}(\alpha,\beta) &= \{(1,j) \mid a < j < b\} \\
    D_{{P}}(\alpha,\beta) &= (b-1,a-1) \\
    \blocking{P}{\alpha}{\beta} &= \{(t,a-1) \mid a \le t \le b-1\} \\
    e(P;\alpha < \beta) &= \sum_{t=a}^{b-1} f^{(t,a-1)} f^{\lambda / (t,a) }
    = \sum_{t=a}^{b-1} f^{(t,a-1)} f^{(\lambda_{1}-a,\lambda_{2}-a) / (t-a) }
\end{align}    
\label{thm-tworows}
\end{theorem}
\begin{proof}
We first assume that \(\alpha=(1,a)\), \(\beta=(2,b)\), with \(1\le b<a \le \lambda_{1}\).
To facilitate the visualisation of the steps taken in the proof, here is the decorated tableau when \(\lambda=(7,6)\), \(a=5\), \(b=3\):
\ytableausetup{nosmalltableaux}
\begin{center}
\begin{ytableau}
*(yellow) F & *(yellow) F & *(yellow) F & *(yellow) F & *(red) a & *(cyan) o & *(cyan) o \\
*(gray) V & *(gray) V & *(green) b & *(cyan) o & *(cyan) o & *(cyan) o
\end{ytableau}.
\end{center}
\ytableausetup{smalltableaux}

It is immediate that \(A_{P}(\alpha) =\{(1,i) \mid 1 \le i \le a-1\}\) which we can represent as the subpartition
\((a-1) \subset \lambda\).
Since \(G_{{P}}(\alpha,\beta)\) is the set of cells that are
\(\not \ge \alpha\),
\(\not \ge \beta\),
\(\not \le \alpha\),
they must reside on the second row, to the left of \(\beta\), hence
\(G_{P}(\alpha,\beta) =\{(2,j) \mid 1 \le j \le b - 1\).
The complete ideal \(D_{p}(\alpha,\beta)\)
is comprised of the cells that are
\(\not \ge \alpha\),
\(\not \ge \beta\),
so from the first row, it chooses precisely \(A_{p}(\alpha)\), and from the second row,
\(D_{P}(\alpha,\beta)\), hence
\[
D_{{P}}(\alpha,\beta) = \{(1,i) \mid 1 \le i \le a-1\} \cup \{(2,j) \mid 1 \le j \le b-1\} = (a-1,b-1),
\]
with the convention that \(b-1\) may be zero.
A blocking ideal \(T\) contains all of \(A_{p}(\alpha)\) and an order ideal subset of
\(G_{P}(\alpha,\beta)\), so it is of the form \((a-1,t)\) with \(0 \le t \le b-1\).
For this \(T\), \(T \cup \{\alpha\} = (a,t)\), so the blocking expansion becomes
\[
e(P;\alpha < \beta) = \sum_{t=0}^{b-1} f^{(a-1,t)} f^{\lambda / (a,t).}
\]
For \(t \ge 1\), the skew partition \(\lambda / (a,t)\) has \(t\) empty columns to the left,
and can be ``reduced'' to \((\lambda_{1}-t,\lambda_{2}-t) / (a-t)\) since
\(f^{\lambda / (a,t)} = f^{(\lambda_{1}-t,\lambda_{2}-t) / (a-t)}\); the reduced expansion is thus
\[ e(P;\alpha < \beta) = f^{(a-1)} f^{\lambda / (a)} + \sum_{t=1}^{b-1} f^{(a-1,t)} f^{(\lambda_{1}-t,\lambda_{2}-t) / (a-t)}.\]

The case \(\alpha=(2,a)\), \(\beta=(1,b)\), \(1\le a < b \le \lambda_{1}\) is proved similarly.
\end{proof}
\subsection{Number of SYT on partitions and skew-partitions with two rows}
\label{sec:org8b14970}
To find \(\Prob(P_{\lambda}; \alpha < \beta) = \frac{e(P_{\lambda}(\alpha,\beta)}{e(P_{\lambda})}\) for partition posets on partitions
with two rows, we start by  calculating \(\fla = e(P_{\lambda})\) for
\(\lambda = (\lambda_{1},\lambda_{1})\). Using Jacobi-Trudi-Aitken, we have:
\begin{lemma}
When \(\lambda = (\lambda_{1},\lambda_{2})\) with \(0 < \lambda_{2} \le \lambda_{1}\), 
then
\begin{equation}
\label{eq-fla-tworows}
  f^{\lambda} = (\lambda_{1} + \lambda_{2})!
       \begin{vmatrix}
          \frac{1}{\lambda_{1}!} & \frac{1}{(\lambda_{1} + 1)!} \\
          \frac{1}{(\lambda_{2} - 1)!} & \frac{1}{\lambda_{2}!} \\
       \end{vmatrix} 
     = \frac{(\lambda_{1} +  \lambda_{2})!(1 + \lambda_{1} - \lambda_{2})}{(\lambda_{1}+1)!\lambda_{2}!}
\end{equation}
\label{lemma-fl-tworows}
\end{lemma}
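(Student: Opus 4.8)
The plan is to compute the product of all hook lengths of a two-row shape directly, then to feed the result into the two formulae already recorded in the excerpt, namely the hook-length formula (HL) and the Jacobi-Trudi-Aitken formula (JTA), and check that all the displayed expressions coincide.

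First I would write out the hook lengths of $\lambda=(\lambda_1,\lambda_2)$ explicitly via $h_\lambda(i,j)=\lambda_i-j+\lambda'_j-i+1$. Since $\lambda$ has only two rows, $\lambda'_j=2$ for $1\le j\le\lambda_2$ and $\lambda'_j=1$ for $\lambda_2<j\le\lambda_1$. Thus in the first row the hook lengths are $\lambda_1-j+2$ for $j\le\lambda_2$ (running over $\lambda_1+1,\lambda_1,\dots,\lambda_1-\lambda_2+2$) and $\lambda_1-j+1$ for $\lambda_2<j\le\lambda_1$ (running over $\lambda_1-\lambda_2,\dots,1$), while in the second row they are $\lambda_2-j+1$ for $1\le j\le\lambda_2$ (running over $\lambda_2,\dots,1$). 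The key observation is that the first row contributes exactly the values $\{1,2,\dots,\lambda_1+1\}$ with the single value $1+\lambda_1-\lambda_2$ omitted, so its contribution to $H_\lambda(\lambda)$ is $(\lambda_1+1)!/(1+\lambda_1-\lambda_2)$, and the second row contributes $\lambda_2!$. This yields the first displayed identity, and then (HL) gives $f^\lambda=|\lambda|!/H_\lambda(\lambda)=(\lambda_1+\lambda_2)!\big/\big(\tfrac{(\lambda_1+1)!}{1+\lambda_1-\lambda_2}\lambda_2!\big)$, which is the second line.

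Next I would obtain the determinantal expression from (JTA) applied with $\mu=\emptyset$ (formally $\mu=(0,0)$), so that the matrix entries are $g(\lambda_i-i+j)$, i.e. $g(\lambda_1),g(\lambda_1+1)$ in the first row and $g(\lambda_2-1),g(\lambda_2)$ in the second. All arguments are nonnegative because $\lambda_1\ge\lambda_2\ge 1$, so each $g$ falls in the branch $g(m)=1/m!$, producing precisely the $2\times 2$ determinant in the statement. Finally I would expand and clear denominators: $\tfrac{1}{\lambda_1!\lambda_2!}-\tfrac{1}{(\lambda_1+1)!(\lambda_2-1)!}=\tfrac{(\lambda_1+1)-\lambda_2}{(\lambda_1+1)!\lambda_2!}$, and multiplying by $(\lambda_1+\lambda_2)!$ reproduces the last line, closing the chain of equalities.

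There is no real obstacle; the only points requiring care are the bookkeeping of the ``gap'' in the first-row hook lengths at the value $1+\lambda_1-\lambda_2$ (including the degenerate case $\lambda_1=\lambda_2$, where that gap is the value $1$ and the second block of first-row hook lengths is empty), and checking that $\lambda_2-1\ge 0$ so that $g(\lambda_2-1)$ lies in the nonzero branch of $g$.
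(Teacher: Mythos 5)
Your proposal is correct and follows essentially the same route as the paper: compute the two-row hook lengths directly (first row giving $(\lambda_1+1)!/(1+\lambda_1-\lambda_2)$, second row giving $\lambda_2!$), apply the hook-length formula, then apply Jacobi--Trudi--Aitken with $\mu=\emptyset$ and expand the $2\times 2$ determinant. Your version just spells out the bookkeeping (the omitted value $1+\lambda_1-\lambda_2$ and the degenerate case $\lambda_1=\lambda_2$) that the paper's terser proof leaves implicit.
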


Next, since terms of the form \(\flamu\) occur in the blocking expansion of
\(e(P_{\lambda};  \alpha < \beta)\), we calculate those, for two-row skew partitions. We will
also be interested in the limit of \(\Prob({P_{\lambda}}; \alpha < \beta)\) as
\((\lambda_{1},\lambda_{2}) \to (\infty,\infty)\) in such a way that \(\lambda_{1} - \lambda_{2}\) remains bounded. We therefore
compute the corresponding limits of \(\frac{\flamu}{\fla}\).

\begin{lemma}
Let \(\lambda=(\lambda_{1},\lambda_{2})\), \(\mu=(\mu_{1})\),
\(\tilde{\lambda}=(\lambda_{1}+k,\lambda_{2}+k)\),
\(\tilde{\mu}=(\mu_{1}+k,k)\).
Then
\begin{equation}
\label{eq-flamutw}
f^{\lambda / \mu}  = f^{\tilde{\lambda} / \tilde{\mu}}
=
|\lambda / \mu|!
\left \lvert
\begin{matrix}
g(\lambda_{1} - \mu_{1}) & g(\lambda_{1} + 1) \\
g(\lambda_{2} - \mu_{{1}} -1) & g(\lambda_{2})
\end{matrix}
\right \rvert
\end{equation}
If \(\mu_{1} \ge \lambda_{2}\) then this quantity is
\begin{equation}
\label{eq-flamutw-ce}
  \frac{(\lambda_{1} + \lambda_{2} - \mu_{1})!}{(\lambda_{1} - \mu_{1})! \lambda_{2}!}
  = \binom{\lambda_{1} + \lambda_{2} - \mu_{1}}{\lambda_{2}}
\end{equation}
and if \(\mu_{1} < \lambda_{2}\) it is
\begin{equation}
\label{eq-flamutw-cg}
(\lambda_{1} + \lambda_{2} - \mu_{1})!
\left(
\frac{1}{(\lambda_{1} - \mu_{1})!} \frac{1}{\lambda_{1}!}
-
\frac{1}{(\lambda_{1}+1)!}  \frac{1}{(\lambda_{2} - \mu_{1} -1)!}
\right)
\end{equation}

Consequently,
\begin{equation}
\label{eq-probtw-c}
  \begin{split}
    \frac{\flamu}{\fla}
    & =
      \frac{|\lambda|!
      \begin{vmatrix}
        g(\lambda_{1}) & g(\lambda_{1} + 1) \\
        g(\lambda_{2} -1) & g(\lambda_{2})
      \end{vmatrix}
      }
      {|\lambda / \mu|!
      \begin{vmatrix}
        g(\lambda_{1} - \mu_{1}) & g(\lambda_{1} + 1) \\
        g(\lambda_{2} - \mu_{{1}} -1) & g(\lambda_{2})
      \end{vmatrix}
      } \\
  \end{split}
 \end{equation}
which for \(\mu_{1} \ge \lambda_{2}\) is
\begin{equation}
\label{eq-qtw-ce}
\frac{(\lambda_{1} + \lambda_{2} - \mu_{1})!}{(\lambda_{1} - \mu_{1})!}
\frac{(\lambda_{1}+1)!}{1 + \lambda_{1} - \lambda_{2}}
\end{equation}
and for \(\mu_{1} < \lambda_{2}\) is
\phantomsection
\label{eq-flq-sym}
\begin{equation}
 \frac{{\left({\left({\lambda_1} + 1\right)} {\lambda_1}! \left({\lambda_2} - {\mu_1} - 1\right)! - {\lambda_2} \left({\lambda_1} - {\mu_1}\right)! \left({\lambda_2} - 1\right)!\right)} \left({\lambda_1} + {\lambda_2} - {\mu_1}\right)!}{{\left({\lambda_1} - {\lambda_2} + 1\right)} \left({\lambda_1} + {\lambda_2}\right)! \left({\lambda_1} - {\mu_1}\right)! \left({\lambda_2} - {\mu_1} - 1\right)!}
 \end{equation}
When \((\lambda_{1},\lambda_{2}) \to (+\infty, +\infty)\) with \(\lambda_{1} - \lambda_{2}\) bounded,
this last expression, and thus \(\frac{\flamu}{\fla}\), tends to the limit
\phantomsection
\label{eq-flq-eqlim}
\begin{equation}
 \frac{{\mu_1} + 1}{2^{{\mu_1}}}
 \end{equation}
\label{lemma-flm-tworows}
\end{lemma}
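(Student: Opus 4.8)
The plan is to derive every displayed formula from the Jacobi--Trulli--Aitken determinant (Theorem~\ref{thm-aitken}) together with the closed form for $f^{\lambda}$ from Lemma~\ref{lemma-fl-tworows}, and then to extract the limit by a leading-term estimate.

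The equality $f^{\lambda/\mu}=f^{\tilde\lambda/\tilde\mu}$ is immediate: passing from $\mu=(\mu_{1})$, $\lambda=(\lambda_{1},\lambda_{2})$ to $\tilde\mu=(\mu_{1}+k,k)$, $\tilde\lambda=(\lambda_{1}+k,\lambda_{2}+k)$ merely prepends $k$ empty columns on the left, which does not change the standard skew tableaux; this is the reduction lemma following Definition~\ref{def-reduced}, read in reverse. To obtain~(\ref{eq-flamutw}) I would apply (JTA) to $\lambda=(\lambda_{1},\lambda_{2})$ with $\mu=(\mu_{1},0)$ padded to length two: the arguments $\lambda_{i}-\mu_{j}-i+j$ are $\lambda_{1}-\mu_{1}$, $\lambda_{1}+1$, $\lambda_{2}-\mu_{1}-1$, $\lambda_{2}$, which is exactly the displayed matrix. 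One then splits on the sign of $\lambda_{2}-\mu_{1}-1$: if $\mu_{1}\ge\lambda_{2}$ then $g(\lambda_{2}-\mu_{1}-1)=0$, the determinant collapses to $g(\lambda_{1}-\mu_{1})\,g(\lambda_{2})=\frac{1}{(\lambda_{1}-\mu_{1})!\,\lambda_{2}!}$, and $(\lambda_{1}-\mu_{1})+\lambda_{2}=\lambda_{1}+\lambda_{2}-\mu_{1}$ turns this into the binomial~(\ref{eq-flamutw-ce}); if $\mu_{1}<\lambda_{2}$ all four $g$-values are reciprocal factorials and~(\ref{eq-flamutw-cg}) is read off directly. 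The ratios~(\ref{eq-probtw-c}), (\ref{eq-qtw-ce}) and~(\ref{eq-flq-sym}) then follow by dividing these expressions by $f^{\lambda}=\frac{(\lambda_{1}+\lambda_{2})!\,(1+\lambda_{1}-\lambda_{2})}{(\lambda_{1}+1)!\,\lambda_{2}!}$ (Lemma~\ref{lemma-fl-tworows}, itself the $\mu=\emptyset$ case of (JTA)) and clearing denominators -- a routine, if tedious, simplification of the kind one hands to a computer algebra system.

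The substantive step is the limit. Once $(\lambda_{1},\lambda_{2})$ is large we are in the case $\mu_{1}<\lambda_{2}$, and putting the two fractions of~(\ref{eq-flamutw-cg}) over the common denominator $(\lambda_{1}+1)!\,\lambda_{2}!$ rewrites the bracket as $A-B$, where $A=(\lambda_{1}+1)\lambda_{1}\cdots(\lambda_{1}-\mu_{1}+1)$ and $B=\lambda_{2}(\lambda_{2}-1)\cdots(\lambda_{2}-\mu_{1})$ are products of $\mu_{1}+1$ consecutive integers. Dividing by $f^{\lambda}$ and cancelling factorials gives
\[
\frac{f^{\lambda/\mu}}{f^{\lambda}}
=\frac{A-B}{(1+\lambda_{1}-\lambda_{2})\,\prod_{i=0}^{\mu_{1}-1}(\lambda_{1}+\lambda_{2}-i)}.
\]
Here is the crux, and the one place needing care: $A$ and $B$ are both monic of degree $\mu_{1}+1$ in the large parameter, so their top terms cancel and $A-B$ has degree only $\mu_{1}$; expanding $A$ and $B$ shows that the coefficient of $(\text{large})^{\mu_{1}}$ in $A-B$ equals $(\mu_{1}+1)\bigl((\lambda_{1}+1)-\lambda_{2}\bigr)=(\mu_{1}+1)(1+\lambda_{1}-\lambda_{2})$. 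The denominator has leading term $(1+\lambda_{1}-\lambda_{2})\cdot 2^{\mu_{1}}\,(\text{large})^{\mu_{1}}$, so letting $(\lambda_{1},\lambda_{2})\to(\infty,\infty)$ with $\lambda_{1}-\lambda_{2}$ bounded, the factor $1+\lambda_{1}-\lambda_{2}$ cancels and the quotient tends to $\frac{\mu_{1}+1}{2^{\mu_{1}}}$. That cancellation is exactly what makes the limit independent of the bounded offset $\lambda_{1}-\lambda_{2}$, justifying the footnote: on the diagonal $\lambda_{1}=\lambda_{2}=t$ one may instead pull the common run $t(t-1)\cdots(t-\mu_{1}+1)$ out of $A-B$, getting $A-B=(\mu_{1}+1)\,t!/(t-\mu_{1})!$ and hence $\frac{f^{(t,t)/(\mu_{1})}}{f^{(t,t)}}=(\mu_{1}+1)\prod_{i=0}^{\mu_{1}-1}\frac{t-i}{2t-i}\longrightarrow\frac{\mu_{1}+1}{2^{\mu_{1}}}$.
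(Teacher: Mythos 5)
Your derivation of the determinantal formula and the two case distinctions is exactly the paper's route: apply the Jacobi--Trulli--Aitken determinant to $\mu=(\mu_{1},0)$ padded to length two, observe that $g(\lambda_{2}-\mu_{1}-1)$ vanishes precisely when $\mu_{1}\ge\lambda_{2}$, and divide by the closed form for $f^{\lambda}$ from Lemma~\ref{lemma-fl-tworows}; the paper's proof says no more than this. (Incidentally, your reading of the generic case off the determinant silently corrects a typo in the statement, where the factor $\frac{1}{\lambda_{1}!}$ in the first term of~(\ref{eq-flamutw-cg}) should be $\frac{1}{\lambda_{2}!}=g(\lambda_{2})$; your common-denominator step only works for the corrected version, which is the right one.) Where you genuinely diverge is the limit: the paper simply reports that it ``was found by the computer algebra package Sympy,'' whereas you prove it by hand. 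Your argument --- writing the bracket as $\frac{A-B}{(\lambda_{1}+1)!\,\lambda_{2}!}$ with $A$ and $B$ monic products of $\mu_{1}+1$ consecutive integers, noting that the top-degree terms cancel so that $A-B$ drops to degree $\mu_{1}$ with leading coefficient $(\mu_{1}+1)(1+\lambda_{1}-\lambda_{2})$, and cancelling that factor against the $1+\lambda_{1}-\lambda_{2}$ in the denominator --- is correct (I checked the coefficient computation and the $2^{\mu_{1}}$ coming from $\prod_{i=0}^{\mu_{1}-1}(\lambda_{1}+\lambda_{2}-i)$), and it buys something the CAS invocation does not: it makes visible \emph{why} the limit is independent of the bounded offset $\lambda_{1}-\lambda_{2}$, which the paper only asserts in a footnote. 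Your closing remark about the diagonal case $\lambda_{1}=\lambda_{2}=t$, where $A-B=(\mu_{1}+1)\,t!/(t-\mu_{1})!$ exactly, is also correct and recovers the footnote's claim that one may compute the limit along the Catalan diagonal without loss of generality.
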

\begin{proof}
The identity (\ref{eq-flamutw}) follows from Theorem \ref{thm-aitken} and Lemma \ref{lemma-fl-tworows};
(\ref{eq-flamutw-ce}) and  (\ref{eq-flamutw-cg}) are immediate consequences. Using Lemma \ref{lemma-fl-tworows}
yields the next three equations. The last limit was found using the computer algebra package Sympy \autocite{meurer2017sympy}.
\end{proof}
\subsection{Number of SYT with placing a given cell before another given cell, for partitions  with two rows}
\label{sec:org4d192ff}
We continue to study the case \(\lambda=(\lambda_{1}, \lambda_{2})\), \(\alpha, \beta \in P_{\lambda}\) incomparable. As noted earlier,
we must have that \(\alpha\) and \(\beta\) are situated in different rows.
Since
\begin{equation}
  e(P_{\lambda}; \alpha < \beta) + e(P_{\lambda}; \beta < \alpha)
  = e(P_{\lambda})
  = \fla
\end{equation}
we can assume that \(\alpha\) is in the first row, and \(\beta\) in the second row,
so \(\alpha=(1,a)\), \(\beta=(2,b)\), \(1 \le b <a \le \lambda_{1}\).
\subsubsection{The cell in the first row precedes the cell in the second row, which is situated in the beginning of the row}
\label{sec:org75cbd31}
Before tackling the general case, we study what happens when \(\alpha=(1,a)\), \(\beta=(2,1)\), \(\lambda=(\lambda_{1},\lambda_{2})\), \(\alpha\) precedes \(\beta\).
It is almost as simple as the ``warmup'' case, since the ``blocking expansion'' will have only one term. We illustrate the particular case \(\lambda=(10,7)\), \(a=5\).
\[\ydiagram[*(green) \bullet]
    {4+1, 0+1}
    *[*(white)]{10,7}.
\]

\begin{theorem}
Let \(a>1\) be an integer, \(\alpha=(1,a)\), \(\beta=(2,1)\), \(\lambda=(\lambda_{1},\lambda_{2})\).
Then
\begin{equation}
\label{eq:lam2b1expansion}
  \begin{split}
    e(P_{\lambda}; \alpha < \beta) & = f^{(a-1)} f^{\lambda / (a)} = f^{\lambda / (a)} \\
    \Prob(P_{\lambda}; \alpha < \beta)  & = \frac{e(P_{\lambda}; \alpha < \beta)}{e(P_{\lambda})} = \frac{f^{\lambda / (a)}}{\fla}.
  \end{split}
\end{equation}
For the ``special case'' when \(\lambda_{2} \le a\) we have that
\begin{equation}
\label{eq:lam2b1value-exceptional}
  \begin{split}
    f^{\lambda / (a)} & = \frac{(\lambda_{1 }+ \lambda_{2} - a)!}{(\lambda_1 - a)! \lambda_2!} \\
    \frac{f^{\lambda / (a)}}{\fla} &  = \frac{(\lambda_{1} + \lambda_{2} -a)! (\lambda_{1} + 1)!}{(\lambda_{1} - a)!(\lambda_{1} + \lambda_{2})! (1 + \lambda_1 - \lambda_2)}.
  \end{split}
\end{equation}
whereas for the ``generic case'' \(\lambda_{2} > a\) it holds that
\begin{equation}\label{eq:f3}
\begin{split}
f^{\lambda / (a)} & = 
-{\left(\frac{1}{\left(-a + {\lambda_2} - 1\right)! \left({\lambda_1} + 1\right)!} - \frac{1}{\left(-a + {\lambda_1}\right)! {\lambda_2}!}\right)} \left(-a + {\lambda_1} + {\lambda_2}\right)!
 \\  \frac{f^{\lambda / (a)}}{\fla} &  = 
\frac{{\left({\left({\lambda_1} + 1\right)} \left(-a + {\lambda_2} - 1\right)! {\lambda_1}! - {\lambda_2} \left(-a + {\lambda_1}\right)! \left({\lambda_2} - 1\right)!\right)} \left(-a + {\lambda_1} + {\lambda_2}\right)!}{{\left({\lambda_1} - {\lambda_2} + 1\right)} \left(-a + {\lambda_1}\right)! \left(-a + {\lambda_2} - 1\right)! \left({\lambda_1} + {\lambda_2}\right)!}
\end{split}
\end{equation}
\label{thm-b-br}
\end{theorem}
\begin{proof}
Apply Theorem \ref{thm-tworows} and Lemma \ref{lemma-flm-tworows}.
\end{proof}

\begin{mexample}
When \(a=2\), the probability \(c(\lambda_{1},\lambda_{2}) = \Prob(P_{(\lambda_{1},\lambda_{2})}; (1,2) < (2,1))\)
is given by specialising \eqref{eq:f3}, yielding

\begin{equation}
\label{eq:f3a2}
 \frac{{\lambda_1}^{2} + {\left({\lambda_1} - 2\right)} {\lambda_2} + {\lambda_2}^{2} - {\lambda_1}}{{\lambda_1}^{2} + {\left(2 \, {\lambda_1} - 1\right)} {\lambda_2} + {\lambda_2}^{2} - {\lambda_1}} 
 \end{equation}
For small \(\lambda_{1} \ge \lambda_{2}\), \(c(\lambda_{1},\lambda_{2})\) varies as follows (\(\lambda_1\) indexes rows, \(\lambda_{2}\) columns):
\begin{equation}
 \left(\begin{array}{rrrrrrrrrr}
\frac{1}{2} & 0 & 0 & 0 & 0 & 0 & 0 & 0 & 0 & 0 \\
\frac{3}{5} & \frac{3}{5} & 0 & 0 & 0 & 0 & 0 & 0 & 0 & 0 \\
\frac{2}{3} & \frac{9}{14} & \frac{9}{14} & 0 & 0 & 0 & 0 & 0 & 0 & 0 \\
\frac{5}{7} & \frac{19}{28} & \frac{2}{3} & \frac{2}{3} & 0 & 0 & 0 & 0 & 0 & 0 \\
\frac{3}{4} & \frac{17}{24} & \frac{31}{45} & \frac{15}{22} & \frac{15}{22} & 0 & 0 & 0 & 0 & 0 \\
\frac{7}{9} & \frac{11}{15} & \frac{39}{55} & \frac{23}{33} & \frac{9}{13} & \frac{9}{13} & 0 & 0 & 0 & 0 \\
\frac{4}{5} & \frac{83}{110} & \frac{8}{11} & \frac{37}{52} & \frac{64}{91} & \frac{7}{10} & \frac{7}{10} & 0 & 0 & 0 \\
\frac{9}{11} & \frac{17}{22} & \frac{29}{39} & \frac{66}{91} & \frac{5}{7} & \frac{17}{24} & \frac{12}{17} & \frac{12}{17} & 0 & 0 \\
\frac{5}{6} & \frac{41}{52} & \frac{69}{91} & \frac{31}{42} & \frac{29}{40} & \frac{195}{272} & \frac{109}{153} & \frac{27}{38} & \frac{27}{38} & 0 \\
\frac{11}{13} & \frac{73}{91} & \frac{27}{35} & \frac{3}{4} & \frac{25}{34} & \frac{37}{51} & \frac{41}{57} & \frac{68}{95} & \frac{5}{7} & \frac{5}{7}
\end{array}\right) 
 \end{equation}

This table is row increasing but column decreasing;
indeed, we can verify that 
\begin{equation}
\label{eq:u1}
 c(\lambda_1 +1,\lambda_2) - c(\lambda_1,\lambda_2) = \frac{{\left(\lambda_{1} - \lambda_{2} + 3\right)} \lambda_{2}}{{\left(\lambda_{1} + \lambda_{2} + 1\right)} {\left(\lambda_{1} + \lambda_{2}\right)} {\left(\lambda_{1} + \lambda_{2} - 1\right)}} > 0 
 \end{equation}
and that
\begin{equation}
\label{eq:u2}
 c(\lambda_1,\lambda_2+1) - c(\lambda_1,\lambda_2) = -\frac{{\left(\lambda_{1} - \lambda_{2} - 1\right)} {\left(\lambda_{1} + 1\right)}}{{\left(\lambda_{1} + \lambda_{2} + 1\right)} {\left(\lambda_{1} + \lambda_{2}\right)} {\left(\lambda_{1} + \lambda_{2} - 1\right)}} \le 0 
 \end{equation}
The limits of \(c(\lambda_{1},\lambda_{2})\) for a a fixed \(\lambda_{2}\), i.e. down a fixed column,
is \(\lim_{{t \to \infty}}c(t,\lambda_{2}) = 1\).
Down the main diagonal (or parallel to it) the limit is
\(\lim_{{t \to \infty}}c(t+r,t) = 3/4\). This can be verified using formal symbolic methods;
we used SymPy \autocite{meurer2017sympy}. More generally, we conjecture that
for \(p \ge q \ge 1\) coprime integers,
\begin{equation}
\label{a2-conj}
\lim_{t \to \infty} c(tp, tq) = \frac{p^2 + pq + q^2}{(p+q)^2}.
\end{equation}
This means that the limit of \(c\) along a line with rational slope
can take \textbf{any} rational value in the interval \([3/4,1]\).
\label{example-a2}
\end{mexample}

The  probabilities \(\Prob(P_{(\lambda_{1},\lambda_{2})}; (1,a) < (2,b))\) for \(a=2,3,4,5\), \(b=1\)
are shown below.

\(a=3, b=1\)
\begin{equation}
 \left(\begin{array}{rrrrrrrrrr}
\frac{1}{5} & 0 & 0 & 0 & 0 & 0 & 0 & 0 & 0 & 0 \\
\frac{2}{7} & \frac{2}{7} & 0 & 0 & 0 & 0 & 0 & 0 & 0 & 0 \\
\frac{5}{14} & \frac{1}{3} & \frac{1}{3} & 0 & 0 & 0 & 0 & 0 & 0 & 0 \\
\frac{5}{12} & \frac{17}{45} & \frac{4}{11} & \frac{4}{11} & 0 & 0 & 0 & 0 & 0 & 0 \\
\frac{7}{15} & \frac{23}{55} & \frac{13}{33} & \frac{5}{13} & \frac{5}{13} & 0 & 0 & 0 & 0 & 0 \\
\frac{28}{55} & \frac{5}{11} & \frac{11}{26} & \frac{37}{91} & \frac{2}{5} & \frac{2}{5} & 0 & 0 & 0 & 0 \\
\frac{6}{11} & \frac{19}{39} & \frac{41}{91} & \frac{3}{7} & \frac{5}{12} & \frac{7}{17} & \frac{7}{17} & 0 & 0 & 0 \\
\frac{15}{26} & \frac{47}{91} & \frac{10}{21} & \frac{9}{20} & \frac{59}{136} & \frac{65}{153} & \frac{8}{19} & \frac{8}{19} & 0 & 0 \\
\frac{55}{91} & \frac{19}{35} & \frac{1}{2} & \frac{8}{17} & \frac{23}{51} & \frac{25}{57} & \frac{41}{95} & \frac{3}{7} & \frac{3}{7} & 0 \\
\frac{22}{35} & \frac{17}{30} & \frac{71}{136} & \frac{25}{51} & \frac{80}{171} & \frac{43}{95} & \frac{31}{70} & \frac{101}{231} & \frac{10}{23} & \frac{10}{23}
\end{array}\right) 
 \end{equation}

\(a=4, b=1\)
\begin{equation}
 \left(\begin{array}{rrrrrrrrrr}
\frac{1}{14} & 0 & 0 & 0 & 0 & 0 & 0 & 0 & 0 & 0 \\
\frac{5}{42} & \frac{5}{42} & 0 & 0 & 0 & 0 & 0 & 0 & 0 & 0 \\
\frac{1}{6} & \frac{5}{33} & \frac{5}{33} & 0 & 0 & 0 & 0 & 0 & 0 & 0 \\
\frac{7}{33} & \frac{5}{27} & \frac{25}{143} & \frac{25}{143} & 0 & 0 & 0 & 0 & 0 & 0 \\
\frac{14}{55} & \frac{125}{572} & \frac{200}{1001} & \frac{5}{26} & \frac{5}{26} & 0 & 0 & 0 & 0 & 0 \\
\frac{42}{143} & \frac{251}{1001} & \frac{41}{182} & \frac{11}{52} & \frac{7}{34} & \frac{7}{34} & 0 & 0 & 0 & 0 \\
\frac{30}{91} & \frac{461}{1638} & \frac{114}{455} & \frac{63}{272} & \frac{203}{918} & \frac{70}{323} & \frac{70}{323} & 0 & 0 & 0 \\
\frac{33}{91} & \frac{113}{364} & \frac{131}{476} & \frac{257}{1020} & \frac{230}{969} & \frac{74}{323} & \frac{30}{133} & \frac{30}{133} & 0 & 0 \\
\frac{11}{28} & \frac{643}{1904} & \frac{61}{204} & \frac{1055}{3876} & \frac{1231}{4845} & \frac{129}{532} & \frac{345}{1463} & \frac{75}{322} & \frac{75}{322} & 0 \\
\frac{143}{340} & \frac{667}{1836} & \frac{2495}{7752} & \frac{283}{969} & \frac{139}{513} & \frac{268}{1045} & \frac{125}{506} & \frac{50}{207} & \frac{11}{46} & \frac{11}{46}
\end{array}\right) 
 \end{equation}

\(a=5, b=1\)
\begin{equation}
 \left(\begin{array}{rrrrrrrrrr}
\frac{1}{42} & 0 & 0 & 0 & 0 & 0 & 0 & 0 & 0 & 0 \\
\frac{1}{22} & \frac{1}{22} & 0 & 0 & 0 & 0 & 0 & 0 & 0 & 0 \\
\frac{7}{99} & \frac{9}{143} & \frac{9}{143} & 0 & 0 & 0 & 0 & 0 & 0 & 0 \\
\frac{14}{143} & \frac{83}{1001} & \frac{1}{13} & \frac{1}{13} & 0 & 0 & 0 & 0 & 0 & 0 \\
\frac{18}{143} & \frac{19}{182} & \frac{29}{312} & \frac{3}{34} & \frac{3}{34} & 0 & 0 & 0 & 0 & 0 \\
\frac{2}{13} & \frac{461}{3640} & \frac{15}{136} & \frac{31}{306} & \frac{63}{646} & \frac{63}{646} & 0 & 0 & 0 & 0 \\
\frac{33}{182} & \frac{71}{476} & \frac{131}{1020} & \frac{112}{969} & \frac{35}{323} & \frac{2}{19} & \frac{2}{19} & 0 & 0 & 0 \\
\frac{99}{476} & \frac{35}{204} & \frac{1709}{11628} & \frac{211}{1615} & \frac{16}{133} & \frac{502}{4389} & \frac{18}{161} & \frac{18}{161} & 0 & 0 \\
\frac{143}{612} & \frac{79}{408} & \frac{107}{646} & \frac{25}{171} & \frac{139}{1045} & \frac{63}{506} & \frac{11}{92} & \frac{27}{230} & \frac{27}{230} & 0 \\
\frac{1001}{3876} & \frac{139}{646} & \frac{7}{38} & \frac{237}{1463} & \frac{37}{253} & \frac{137}{1012} & \frac{59}{460} & \frac{371}{2990} & \frac{11}{90} & \frac{11}{90}
\end{array}\right) 
 \end{equation}

Pictorially, the  probabilities are displayed in Figure \ref{fig:Mat-tworows-b1-export}.

\begin{figure}[htbp]
\centering
\includegraphics[height=1.0\textheight]{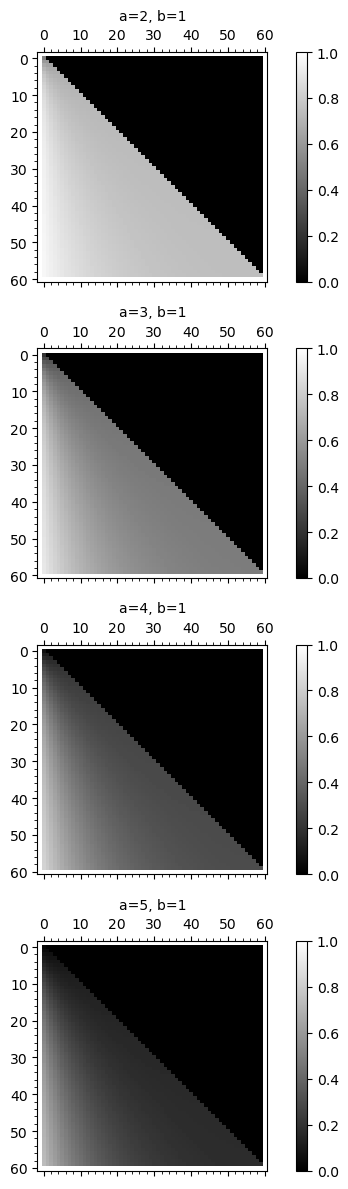}
\caption{\label{fig:Mat-tworows-b1-export}Probabilities \(\Prob(P_{(\lambda_{1},\lambda_{2})}; (1,a) < (2,b))\)  for two-row partitions, a=2,3,4,5 and  b=1, rows indexed by \(\lambda_1\), columns by \(\lambda_{2}\)}
\end{figure}

\begin{theorem}
Assume as before that \(\alpha=(1,a)\), \(\beta=(2,1)\), \(\lambda = (\lambda_{1},\lambda_{2})\).
When \((\lambda_{1},\lambda_{2}) \to (\infty,\infty)\), in such a way that \(\lambda_{1} - \lambda_{2}\) remains bounded, then \(\Prob(P_{\lambda}; \alpha < \beta)\)  tends to the limit value
\phantomsection
\label{latex-f4}
\begin{equation}
\label{eq:f4}
 \frac{a + 1}{2^{a}}
 \end{equation}

On the other hand, when \(\lambda_{2}\) is fixed and \(\lambda_{1} \to \infty\), then
the limit of \(\Prob(P_{\lambda}; \alpha < \beta)\) is 1.
\end{theorem}
\begin{proof}
Apply Lemma \ref{lemma-flm-tworows}.
\end{proof}

We will somewhat sloppily say that (\ref{eq:f4}) is the 
the probability that \(\alpha=(1,a)\) is ordered before \(\beta=(2,1)\) in a SYT on an ``infinitely long'' two-row partition \(\lambda\); note that we demand that the two arms have grown towards infinity with bounded difference. In particular, it is the
limit, as \(n \to \infty\), of \(\Prob(P_{(n,n)} ; \alpha < \beta)\). 
We tabulate this limit value for a few values of \(a\) in Table \ref{tab-tworow-b1}.
\begin{table}[htbp]
\caption{\label{tab-tworow-b1}Probability that \(\alpha=(1,a)\) is ordered before \(\beta=(2,1)\) in an infinitely long two-row partition \(\lambda\)}
\centering
\begin{tabular}{lrrrrrrrrr}
a & 2 & 3 & 4 & 5 & 6 & 7 & 8 & 9 & 10\\
Probability & 3/4 & 1/2 & 5/16 & 3/16 & 7/64 & 1/16 & 9/256 & 5/256 & 11/1024\\
\end{tabular}
\end{table}

\begin{mexample}
If we instead put \(\lambda=(\lambda_{1},\lambda_{2}) = t(2,1)\) and let \(t \to \infty\), then
\(\Prob(P_{\lambda}; \alpha < \beta) \to (2^{{a+1} -1})3^{{-a}}\). 
\label{ex-skewlimit}
\end{mexample}

More generally,  we conjecture the following:
\begin{conjecture}
Let \(p \ge q \ge 1\) be two coprime integers. Then
\begin{equation}
  \lim_{{t \to \infty}} \Prob(P_{(tp,tq)} ; (1,a) < (2,1))
  = \frac{p^{a+1} - q^{a+1}}{(p-q)(p+q)^a}
  = \frac{\sum_{{k=0}}^a p^{a-k}q^k}{(p+q)^a}.
\end{equation}
\end{conjecture}
Setting \(p=q=1\) recovers \ref{eq:f4}, as expected.
\subsubsection{The cell in the first row precedes the cell in the second row, general case}
\label{sec:orgf0bd911}
When \(\lambda=(\lambda_{1},\lambda_{2})\), \(\alpha=(1,a)\), \(\beta=(2,b)\), the ``blocking expansion'' contains
several terms; furthermore, it may be convenient to ``reduce'' \(f^{\lambda_{1}/ (a,t)}\)
to \(f^{(\lambda_{1} -t, \lambda_{2} -t) /(a - t)}\), which have the same value, but is a skew
partition whose second part has only one row; we have previously dealt with
those in Lemma \ref{lemma-flm-tworows}.
As an example, when \(\lambda=(10,7)\), \(\alpha=(1,7)\), \(\beta=(2,4)\), the picture is
\begin{displaymath}
    \ydiagram[*(green) \bullet]
    {6+1, 3+1}
    *[*(white)]{10,7}.
\end{displaymath}

\begin{lemma}
Let \(\lambda = (\lambda_{1}, \lambda_{2})\) and \(\mu=(\mu_{1},\mu_{2}) \subset \lambda\) be partitions, with \(\mu_{2} > 0\).
Put \(\tilde{\lambda} = (\lambda_{1} - \mu_{2}, \lambda_{2} - \mu_{2})\), \(\tilde{\mu} = (\mu_{1} - \mu_{2})\).
Then
\begin{equation}
\label{eq-red-tworow}
  \frac{f^{\tilde{\lambda}}}{\fla}
  = \frac{\lvert \tilde{\lambda} \rvert !}{\lvert {\lambda} \rvert !} \frac{H(\lambda)}{H(\tilde{\lambda})}
  = \frac{(\lambda_{1} + \lambda_{2} -2 \mu_{2})!}{(\lambda_{1} + \lambda_{2})!}
  \frac{(\lambda_{1} + 1)! \lambda_{2}!}{(\lambda_{1} + 1 - \mu_{2})! (\lambda_{2} - \mu_{2})!}
\end{equation}
For fixed \(\mu\), as \(\lambda \to (+\infty,+\infty)\) with bounded difference \(\lambda_{1} - \lambda_{2}\),
\begin{equation}
\label{eq-red-tworow-lim}
  \frac{f^{\tilde{\lambda}}}{\fla} \to 2^{-2\mu_{2}}
\end{equation}
\label{lemma-tworow-reduce}
\end{lemma}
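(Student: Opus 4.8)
The plan is to derive both assertions directly from the hook-length formula \eqref{eq-HL} together with the explicit two-row evaluation of $H_{\nu}(\nu)$ supplied by Lemma~\ref{lemma-fl-tworows}; no new combinatorial input is needed. First I would record that $\lar=(\lambda_{1}-\mu_{2},\lambda_{2}-\mu_{2})$ is genuinely a partition ($\lambda_{1}\ge\lambda_{2}$ gives $\lambda_{1}-\mu_{2}\ge\lambda_{2}-\mu_{2}$, and $\mu\subset\lambda$ gives $\mu_{2}\le\lambda_{2}$), and that $|\lar|=|\lambda|-2\mu_{2}$, since exactly $\mu_{2}$ cells were deleted from each of the two rows. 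Writing $\fla=|\lambda|!/H_{\lambda}(\lambda)$ and $\flar=|\lar|!/H_{\lar}(\lar)$ via \eqref{eq-HL} and dividing, the first equality in \eqref{eq-red-tworow} drops out immediately: $\flar/\fla = (|\lar|!/|\lambda|!)\,(H_{\lambda}(\lambda)/H_{\lar}(\lar))$.

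Next I would substitute the formula $H_{\nu}(\nu)=\frac{(\nu_{1}+1)!}{1+\nu_{1}-\nu_{2}}\,\nu_{2}!$ of Lemma~\ref{lemma-fl-tworows}, once for $\nu=\lambda$ and once for $\nu=\lar$. The observation that makes this clean is that $\lar_{1}-\lar_{2}=\lambda_{1}-\lambda_{2}$, so the two copies of the factor $1/(1+\lambda_{1}-\lambda_{2})$ cancel in the quotient, leaving
\[
  \frac{H_{\lambda}(\lambda)}{H_{\lar}(\lar)}
  =\frac{(\lambda_{1}+1)!\,\lambda_{2}!}{(\lambda_{1}-\mu_{2}+1)!\,(\lambda_{2}-\mu_{2})!}.
\]
(If $\mu_{2}=\lambda_{2}$, so that $\lar$ degenerates to a single row, the same formula holds and can be verified directly.) Multiplying by $|\lar|!/|\lambda|! = (\lambda_{1}+\lambda_{2}-2\mu_{2})!/(\lambda_{1}+\lambda_{2})!$ yields the closed form in \eqref{eq-red-tworow}.

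For the limit \eqref{eq-red-tworow-lim} I would expand every factorial quotient as an explicit \emph{finite} product — legitimate precisely because $\mu_{2}$ is held fixed while $\lambda_{1},\lambda_{2}\to\infty$ — and pair the $2\mu_{2}$ factors of $(\lambda_{1}+\lambda_{2})!/(\lambda_{1}+\lambda_{2}-2\mu_{2})!$, two at a time, against the $\mu_{2}$ factors of $(\lambda_{1}+1)!/(\lambda_{1}-\mu_{2}+1)!$ and the $\mu_{2}$ factors of $\lambda_{2}!/(\lambda_{2}-\mu_{2})!$, obtaining
\[
  \frac{\flar}{\fla}=\prod_{j=0}^{\mu_{2}-1}\frac{(\lambda_{1}+1-j)(\lambda_{2}-j)}{(\lambda_{1}+\lambda_{2}-2j)(\lambda_{1}+\lambda_{2}-2j-1)}.
\]
Since $\lambda_{1}-\lambda_{2}$ stays bounded, $\lambda_{1}/(\lambda_{1}+\lambda_{2})=\tfrac12+\tfrac{\lambda_{1}-\lambda_{2}}{2(\lambda_{1}+\lambda_{2})}\to\tfrac12$, and likewise $\lambda_{2}/(\lambda_{1}+\lambda_{2})\to\tfrac12$, so each of the $\mu_{2}$ factors tends to $\tfrac12\cdot\tfrac12=\tfrac14$ and the whole product tends to $4^{-\mu_{2}}=2^{-2\mu_{2}}$. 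The main obstacle is not conceptual: it is only the bookkeeping in the last display (getting the pairing of factors right) and making sure the passage to the limit is carried out after the expression has been reduced to a product of a fixed number of factors, so that the factor-by-factor limit is valid.
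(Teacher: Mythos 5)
Your proof of the identity \eqref{eq-red-tworow} is correct and coincides with the paper's: both write $\flar/\fla$ as $(|\lar|!/|\lambda|!)\,(H_{\lambda}(\lambda)/H_{\lar}(\lar))$ via the hook-length formula and then substitute the two-row evaluation of Lemma~\ref{lemma-fl-tworows}, with the key cancellation coming from $\lar_{1}-\lar_{2}=\lambda_{1}-\lambda_{2}$; your remark about the degenerate case $\mu_{2}=\lambda_{2}$ is a small extra check the paper omits. Where you genuinely diverge is the limit \eqref{eq-red-tworow-lim}: the paper simply reports that it "was calculated by Maxima," whereas you give a self-contained argument by rewriting the ratio as the finite product
\[
  \prod_{j=0}^{\mu_{2}-1}\frac{(\lambda_{1}+1-j)(\lambda_{2}-j)}{(\lambda_{1}+\lambda_{2}-2j)(\lambda_{1}+\lambda_{2}-2j-1)},
\]
whose $\mu_{2}$ factors each tend to $\tfrac14$ under the bounded-difference hypothesis. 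I checked the pairing: the denominator indices $2j$ and $2j+1$ for $0\le j\le\mu_{2}-1$ exhaust $0,\dots,2\mu_{2}-1$, so the product identity is exact, and the bounded difference is used precisely where you say it is, to get $\lambda_{1}/(\lambda_{1}+\lambda_{2})\to\tfrac12$ and $\lambda_{2}/(\lambda_{1}+\lambda_{2})\to\tfrac12$. Your version buys a human-verifiable proof of the limit in place of a computer-algebra citation, at the cost of a little bookkeeping; the paper's version is shorter but not self-contained.
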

\begin{proof}
When reducing we remove \(\mu_{2}\) cells from both rows, hence
\(\frac{\lvert \tilde{\lambda} \rvert !}{\lvert {\lambda} \rvert !}
= \frac{(\lambda_{1} + \lambda_{2} -2 \mu_{2})!}{(\lambda_{1} + \lambda_{2})!}\).

Recall from Lemma \ref{lemma-fl-tworows} that
\(H_{\lambda}(\lambda) = \frac{(\lambda_{1} + 1)!}{1 +\lambda_{1} - \lambda_{2}} \lambda_{2}!\).
Thus
\begin{equation*}
  \begin{split}
    \frac{H(\lambda)}{H(\tilde{\lambda})}
    & = \frac{\frac{(\lambda_{1} + 1)!}{1 + \lambda_{1} - \lambda_{2}} \lambda_{2}!}
            {\frac{(\lambda_{1} + 1 - \mu_{2})!}{1 + \lambda_{1} - \lambda_{2}} (\lambda_{2} - \mu_{2})! }
      \\
    & = \frac{(\lambda_{1} + 1)! \lambda_{2}!}{(\lambda_{1} + 1 - \mu_{2})! (\lambda_{2} - \mu_{2})!}
  \end{split}
\end{equation*}
Combining these two observations yields (\ref{eq-red-tworow}).

The limit (\ref{eq-red-tworow-lim}) was calculated by Maxima \autocite{maxima}.
\end{proof}

\begin{theorem}
Let \(\lambda = (\lambda_{1}, \lambda_{2})\), \(\alpha = (1,a)\), \(\beta = (2,b)\) with \(1 < b < \lambda_{2} < a < \lambda_{1}\).
Let \(P=P_{\lambda} = P_{(\lambda_{1}, \lambda_{2})}\). Then
\begin{equation}
\label{eq:fexp-ag-bg}
  \begin{split}
    \Prob(P; \alpha < \beta)  & = \frac{f^{\lambda / (a)} + 
    \sum_{t=1}^{b-1} f^{(a-1,t)}  f^{(\lambda_{1}-t,\lambda_{2}-t) / (a-t)}}{\fla} \\
                    & =  \frac{f^{\lambda / (a)}}{\fla} +
    \sum_{t=1}^{b-1} f^{(a-1,t)}  \frac{f^{(\lambda_{1}-t,\lambda_{2}-t) / (a-t)}}{f^{\tilde{\lambda}}} \frac{f^{\tilde{\lambda}}}{\fla}
  \end{split}
\end{equation}
and for any positive integer \(r\),
\begin{equation}
\label{eq:fexp-prob}
   \lim_{\substack{(\lambda_{1}, \lambda_{2}) \to (\infty,\infty) \\ \lambda_{2} \le \lambda_{1} \le \lambda_{2} + r}} \Prob(P; \alpha < \beta)  =
2^{-a} \left( (a+1) + \sum_{t=1}^{b-1} \frac{(a-1+t)!}{a!t!}(a-t)(a-t+1) 2^{-t} \right)
\end{equation}
\label{thm-tworows-prob}
\end{theorem}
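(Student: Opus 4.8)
The plan is to specialise the blocking expansion of Theorem~\ref{thm-tworows} to this $\lambda,\alpha,\beta$, divide through by $e(P)=\fla$, and then let $(\lambda_{1},\lambda_{2})\to(\infty,\infty)$ one summand at a time, feeding each factor into one of the two-row limit lemmas already proved.

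First I would record the finite identity~\eqref{eq:fexp-ag-bg}. The first case of Theorem~\ref{thm-tworows}, applied with $\alpha=(1,a)$ and $\beta=(2,b)$, gives
\[
 e(P;\alpha<\beta)= f^{(a-1)}\,f^{\lambda/(a)} + \sum_{t=1}^{b-1} f^{(a-1,t)}\,f^{(\lambda_{1}-t,\lambda_{2}-t)/(a-t)},
\]
and since $(a-1)$ is a single row, $f^{(a-1)}=1$. Dividing by $\fla$ and, in the $t$-th summand, inserting a factor $f^{\tilde\lambda}/f^{\tilde\lambda}$ with $\tilde\lambda=(\lambda_{1}-t,\lambda_{2}-t)$ turns this into exactly~\eqref{eq:fexp-ag-bg}. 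This step is pure bookkeeping; the mathematical content sits entirely in Theorem~\ref{thm-tworows}.

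Next I would pass to the limit. The sum over $t$ is finite, so it is enough to compute the limit of each summand separately. For the $t=0$ term, $\lim f^{\lambda/(a)}/\fla=(a+1)/2^{a}$ is precisely the limit recorded in Lemma~\ref{lemma-flm-tworows} for $\mu=(a)$; it applies because, along the prescribed approach to $(\infty,\infty)$, the part $\lambda_{2}$ eventually exceeds $a$, putting us in the ``$\mu_{1}<\lambda_{2}$'' branch of that lemma. For $1\le t\le b-1$ I would factor the $t$-th summand as
\[
 f^{(a-1,t)}\cdot\frac{f^{(\lambda_{1}-t,\lambda_{2}-t)/(a-t)}}{f^{(\lambda_{1}-t,\lambda_{2}-t)}}\cdot\frac{f^{(\lambda_{1}-t,\lambda_{2}-t)}}{\fla}.
\]
The first factor is independent of $\lambda$ and equals $(a+t-1)!\,(a-t)/(a!\,t!)$ by Lemma~\ref{lemma-fl-tworows}. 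The reduced partition $(\lambda_{1}-t,\lambda_{2}-t)$ still tends to $(\infty,\infty)$ with its two parts differing by the same bounded amount $\lambda_{1}-\lambda_{2}\le r$, so the middle factor tends to $(a-t+1)/2^{a-t}$ by Lemma~\ref{lemma-flm-tworows} (again the non-degenerate branch, since eventually $a-t<\lambda_{2}-t$), while the last factor tends to $2^{-2t}$ by Lemma~\ref{lemma-tworow-reduce} with $\mu_{2}=t$. Multiplying these three limits, collecting the powers of $2$ and simplifying the factorials, produces the $t$-th summand of~\eqref{eq:fexp-prob}; adding it to the $t=0$ contribution $2^{-a}(a+1)$ gives the stated closed form.

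The genuinely delicate point, beyond the routine arithmetic of assembling powers of $2$ and factorials, is checking that the hypotheses of the two limit lemmas actually hold along the prescribed approach: that for every $t$ with $0\le t\le b-1$ one is eventually in the non-degenerate case of Lemma~\ref{lemma-flm-tworows}, and that the region $\{\,\lambda_{2}\le\lambda_{1}\le\lambda_{2}+r\,\}$ is stable under the reductions $\lambda\mapsto(\lambda_{1}-t,\lambda_{2}-t)$, so that the reduced instances of Lemma~\ref{lemma-flm-tworows} and Lemma~\ref{lemma-tworow-reduce} apply uniformly in $t$. Once that is in place, interchanging the finite sum with the limit is immediate and the expression collapses to~\eqref{eq:fexp-prob}.
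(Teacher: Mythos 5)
Your proposal follows the paper's own proof line for line: the same specialisation of Theorem \ref{thm-tworows}, the same insertion of the factor $f^{(\lambda_1-t,\lambda_2-t)}/f^{(\lambda_1-t,\lambda_2-t)}$ to produce the three-factor decomposition, and the same three limit evaluations via Lemma \ref{lemma-flm-tworows} and Lemma \ref{lemma-tworow-reduce}. Your attention to the hypotheses --- that one is eventually in the $\mu_1<\lambda_2$ branch of Lemma \ref{lemma-flm-tworows}, and that the region $\lambda_2\le\lambda_1\le\lambda_2+r$ is stable under the shift $\lambda\mapsto(\lambda_1-t,\lambda_2-t)$ --- is exactly the right thing to check.

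The one step you wave through is the one that does not work. You correctly compute $f^{(a-1,t)}=\frac{(a+t-1)!\,(a-t)}{a!\,t!}$ from Lemma \ref{lemma-fl-tworows}, so the product of your three limits is
\begin{equation*}
\frac{(a+t-1)!\,(a-t)}{a!\,t!}\cdot\frac{a-t+1}{2^{a-t}}\cdot 2^{-2t}
=2^{-a}\,\frac{(a+t-1)!\,(a-t)(a-t+1)}{a!\,t!}\,2^{-t},
\end{equation*}
and no amount of ``simplifying the factorials'' turns $\frac{(a+t-1)!\,(a-t)(a-t+1)}{a!\,t!}$ into the $\frac{((a-t)!)^2}{a!\,t!}$ appearing in \eqref{eq:fexp-prob}: for $a=3$, $t=1$ the former equals $6$ and the latter $2/3$. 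The fault is inherited rather than yours: the paper's proof makes the same slip (it writes $f^{(a-1,t)}$ with a spurious factorial on $(1+a-1-t)$ and then reduces it incorrectly), and the paper's own table of limit values agrees with the product form, not with the displayed closed form --- e.g.\ $a=3$, $b=2$ gives $\frac{1}{8}(4+3)=\frac{7}{8}$ as tabulated, whereas \eqref{eq:fexp-prob} would give $\frac{1}{8}\bigl(4+\frac{1}{3}\bigr)=\frac{13}{24}$. So your argument, carried out honestly to the end, proves
\begin{equation*}
\lim \Prob(P;\alpha<\beta)=2^{-a}\left((a+1)+\sum_{t=1}^{b-1}\frac{(a+t-1)!\,(a-t)(a-t+1)}{a!\,t!}\,2^{-t}\right),
\end{equation*}
which is the correct statement; you should either record this corrected closed form or flag that the summand printed in the theorem cannot be obtained from the three factors you (correctly) computed.
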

\begin{proof}
Using Theorem  \ref{thm-tworows} we have that
\begin{equation*}
\label{eq:fexp-ag-bg-pf}
  \begin{split}
    \Prob(P; \alpha < \beta)
    & = \frac{e(P;\alpha < \beta)}{e(P)} \\
    & = \frac{1}{\fla} \sum_{t=0}^{b-1} f^{(a-1,t)} f^{\lambda / (a,t)} \\
    & = \frac{1}{\fla} f^{(a-1)} f^{\lambda / (a)} +
      \frac{1}{\fla} \sum_{t=1}^{b-1} f^{(a-1,t)} f^{(\lambda_{1}-t,\lambda_{2}-t) / (a-t)} \\
    & = \frac{f^{\lambda / (a)}}{\fla} +
      \sum_{t=1}^{b-1} f^{(a-1,t)} \frac{f^{(\lambda_{1}-t,\lambda_{2}-t) / (a-t)}}{\fla} \\
    & = \frac{f^{\lambda / (a)}}{\fla} +
      \sum_{t=1}^{b-1} f^{(a-1,t)}
      \frac{f^{(\lambda_{1}-t,\lambda_{2}-t) / (a-t)}}{f^{(\lambda_{1}-t,\lambda_{2}-t)}}
      \frac{f^{(\lambda_{1}-t,\lambda_{2}-t)}}{\fla}
      \end{split}.
\end{equation*}

From Lemma \ref{lemma-flm-tworows} we get that
\begin{align*}
  \frac{f^{\lambda / (a)}}{\fla}  & \to (a + 1)2^{-a} \\
  \frac{f^{(\lambda_{1}-t,\lambda_{2}-t) / (a-t)}}{f^{(\lambda_{1}-t,\lambda_{2}-t)}}  & \to (a-t+1)2^{-(a-t)}
\end{align*}
as \((\lambda_{1}, \lambda_{2}) \to (\infty,\infty)\) while \(\lambda_{2} \le \lambda_{1} \le \lambda_{2} + r\) for some fixed \(r\).

Lemma \ref{lemma-fl-tworows} gives that
\begin{equation*}
 f^{(a - 1, t)} = \frac{(a - 1 + t)!(a - t)}{a!t!}.
\end{equation*}

Finally, Lemma \ref{lemma-tworow-reduce} shows that
\begin{equation*}
 \frac{f^{(\lambda_{1}-t,\lambda_{2}-t)}}{\fla} \to 2^{-2t}
\end{equation*}
as \((\lambda_{1}, \lambda_{2}) \to (\infty,\infty)\) with \(\lambda_{1} - \lambda_{2}\) bounded.

Hence, combining, we arrive at
\begin{equation*}
\label{eq-tworows-prob-limit-general}
  \begin{split}
    \lim_{\lambda \to (\infty,\infty)}  \Prob(P; \alpha < \beta)
    & = \lim_{\lambda \to (\infty,\infty)} \left(
      \frac{f^{\lambda / (a)}}{\fla}
    + \sum_{t=1}^{b-1} f^{(a-1,t)}
      \frac{f^{(\lambda_{1}-t,\lambda_{2}-t) / (a-t)}}{f^{(\lambda_{1}-t,\lambda_{2}-t)}}
      \frac{f^{(\lambda_{1}-t,\lambda_{2}-t)}}{\fla} \right) \\
    & = \lim_{\lambda \to (\infty,\infty)} \frac{f^{\lambda / (a)}}{\fla} \\
    & +  \sum_{t=1}^{b-1} f^{(a-1,t)}
      \lim_{\lambda \to (\infty,\infty)} \frac{f^{(\lambda_{1}-t,\lambda_{2}-t) / (a-t)}}{f^{(\lambda_{1}-t,\lambda_{2}-t)}}
      \lim_{\lambda \to (\infty,\infty)}\frac{f^{(\lambda_{1}-t,\lambda_{2}-t)}}{\fla}  \\
    & = (a+1)2^{-a}  +
      \sum_{t=1}^{b-1} \frac{(a-1+t)!(a-t)}{a!t!} (a-t+1)2^{-(a-t)} 2^{-2t} \\
    & = 2^{-a} \left( (a+1) + \sum_{t=1}^{b-1} \frac{(a-1+t)!}{a!t!}(a-t)(a-t+1) 2^{-t} \right)
  \end{split}
\end{equation*}
as \((\lambda_{1}, \lambda_{2}) \to (\infty,\infty)\) with \(\lambda_{1} - \lambda_{2}\) bounded.
\end{proof}

\begin{remark}
We can substitute (\ref{eq-flamutw-cg}) in (\ref{eq:fexp-ag-bg}) to obtain an explicit formula
involving binomial coefficients.
\end{remark}
\subsubsection{Examples}
\label{sec:org2941d6e}
The  probabilities
\(\Prob({{P_{\lambda}}}; \alpha < \beta)\)
 with fixed \(\alpha=(1,a)\), \(\beta=(2,b)\),
\(\lambda_{1} \ge \lambda_{2}\) varying, can be arranged as a lower-triangular matrix.
We have already shown the case \(a=2\), \(b=1\).
Here are a few others. For simplicity, \(\lambda_{1},\lambda_{2}\) starts at \(a\).

\(a=3\), \(b=2\)
\begin{displaymath}
 \left(\begin{array}{rrrrrrrrrr}
\frac{3}{5} & 0 & 0 & 0 & 0 & 0 & 0 & 0 & 0 & 0 \\
\frac{5}{7} & \frac{5}{7} & 0 & 0 & 0 & 0 & 0 & 0 & 0 & 0 \\
\frac{11}{14} & \frac{16}{21} & \frac{16}{21} & 0 & 0 & 0 & 0 & 0 & 0 & 0 \\
\frac{5}{6} & \frac{4}{5} & \frac{26}{33} & \frac{26}{33} & 0 & 0 & 0 & 0 & 0 & 0 \\
\frac{13}{15} & \frac{137}{165} & \frac{241}{297} & \frac{115}{143} & \frac{115}{143} & 0 & 0 & 0 & 0 & 0 \\
\frac{49}{55} & \frac{47}{55} & \frac{119}{143} & \frac{821}{1001} & \frac{53}{65} & \frac{53}{65} & 0 & 0 & 0 & 0 \\
\frac{10}{11} & \frac{125}{143} & \frac{851}{1001} & \frac{76}{91} & \frac{43}{52} & \frac{14}{17} & \frac{14}{17} & 0 & 0 & 0 \\
\frac{12}{13} & \frac{81}{91} & \frac{709}{819} & \frac{309}{364} & \frac{57}{68} & \frac{382}{459} & \frac{268}{323} & \frac{268}{323} & 0 & 0 \\
\frac{85}{91} & \frac{411}{455} & \frac{80}{91} & \frac{205}{238} & \frac{433}{510} & \frac{815}{969} & \frac{1351}{1615} & \frac{111}{133} & \frac{111}{133} & 0 \\
\frac{33}{35} & \frac{32}{35} & \frac{106}{119} & \frac{89}{102} & \frac{555}{646} & \frac{4117}{4845} & \frac{561}{665} & \frac{1229}{1463} & \frac{135}{161} & \frac{135}{161}
\end{array}\right)
 \end{displaymath}

\(a=4\), \(b=2\)
\begin{displaymath}
 \left(\begin{array}{rrrrrrrrrr}
\frac{2}{7} & 0 & 0 & 0 & 0 & 0 & 0 & 0 & 0 & 0 \\
\frac{17}{42} & \frac{17}{42} & 0 & 0 & 0 & 0 & 0 & 0 & 0 & 0 \\
\frac{1}{2} & \frac{31}{66} & \frac{31}{66} & 0 & 0 & 0 & 0 & 0 & 0 & 0 \\
\frac{19}{33} & \frac{157}{297} & \frac{73}{143} & \frac{73}{143} & 0 & 0 & 0 & 0 & 0 & 0 \\
\frac{7}{11} & \frac{83}{143} & \frac{551}{1001} & \frac{7}{13} & \frac{7}{13} & 0 & 0 & 0 & 0 & 0 \\
\frac{98}{143} & \frac{626}{1001} & \frac{107}{182} & \frac{59}{104} & \frac{19}{34} & \frac{19}{34} & 0 & 0 & 0 & 0 \\
\frac{66}{91} & \frac{544}{819} & \frac{453}{728} & \frac{81}{136} & \frac{533}{918} & \frac{371}{646} & \frac{371}{646} & 0 & 0 & 0 \\
\frac{69}{91} & \frac{127}{182} & \frac{311}{476} & \frac{127}{204} & \frac{584}{969} & \frac{191}{323} & \frac{78}{133} & \frac{78}{133} & 0 & 0 \\
\frac{11}{14} & \frac{173}{238} & \frac{139}{204} & \frac{837}{1292} & \frac{605}{969} & \frac{81}{133} & \frac{878}{1463} & \frac{96}{161} & \frac{96}{161} & 0 \\
\frac{55}{68} & \frac{1381}{1836} & \frac{5477}{7752} & \frac{1301}{1938} & \frac{331}{513} & \frac{131}{209} & \frac{311}{506} & \frac{503}{828} & \frac{139}{230} & \frac{139}{230}
\end{array}\right)
 \end{displaymath}

\(a=5\), \(b=2\)
\begin{displaymath}
 \left(\begin{array}{rrrrrrrrrr}
\frac{5}{42} & 0 & 0 & 0 & 0 & 0 & 0 & 0 & 0 & 0 \\
\frac{13}{66} & \frac{13}{66} & 0 & 0 & 0 & 0 & 0 & 0 & 0 & 0 \\
\frac{3}{11} & \frac{107}{429} & \frac{107}{429} & 0 & 0 & 0 & 0 & 0 & 0 & 0 \\
\frac{49}{143} & \frac{303}{1001} & \frac{41}{143} & \frac{41}{143} & 0 & 0 & 0 & 0 & 0 & 0 \\
\frac{58}{143} & \frac{709}{2002} & \frac{373}{1144} & \frac{139}{442} & \frac{139}{442} & 0 & 0 & 0 & 0 & 0 \\
\frac{6}{13} & \frac{293}{728} & \frac{19}{52} & \frac{457}{1326} & \frac{217}{646} & \frac{217}{646} & 0 & 0 & 0 & 0 \\
\frac{93}{182} & \frac{2767}{6188} & \frac{5351}{13260} & \frac{364}{969} & \frac{581}{1615} & \frac{6}{17} & \frac{6}{17} & 0 & 0 & 0 \\
\frac{66}{119} & \frac{41}{84} & \frac{1705}{3876} & \frac{9847}{24225} & \frac{870}{2261} & \frac{9258}{24871} & \frac{1122}{3059} & \frac{1122}{3059} & 0 & 0 \\
\frac{121}{204} & \frac{4073}{7752} & \frac{2297}{4845} & \frac{141}{323} & \frac{7287}{17765} & \frac{3777}{9614} & \frac{669}{1748} & \frac{87}{230} & \frac{87}{230} & 0 \\
\frac{143}{228} & \frac{542}{969} & \frac{3923}{7752} & \frac{451}{969} & \frac{91}{209} & \frac{7963}{19228} & \frac{2}{5} & \frac{1171}{2990} & \frac{803}{2070} & \frac{803}{2070}
\end{array}\right)
 \end{displaymath}

As before, we can look at a matrix plot of these cases.
They are shown in Figure \ref{fig:Mat-tworows-c1-export}.

\begin{center}
\begin{figure}[htbp]
\centering
\includegraphics[width=1.1\textwidth]{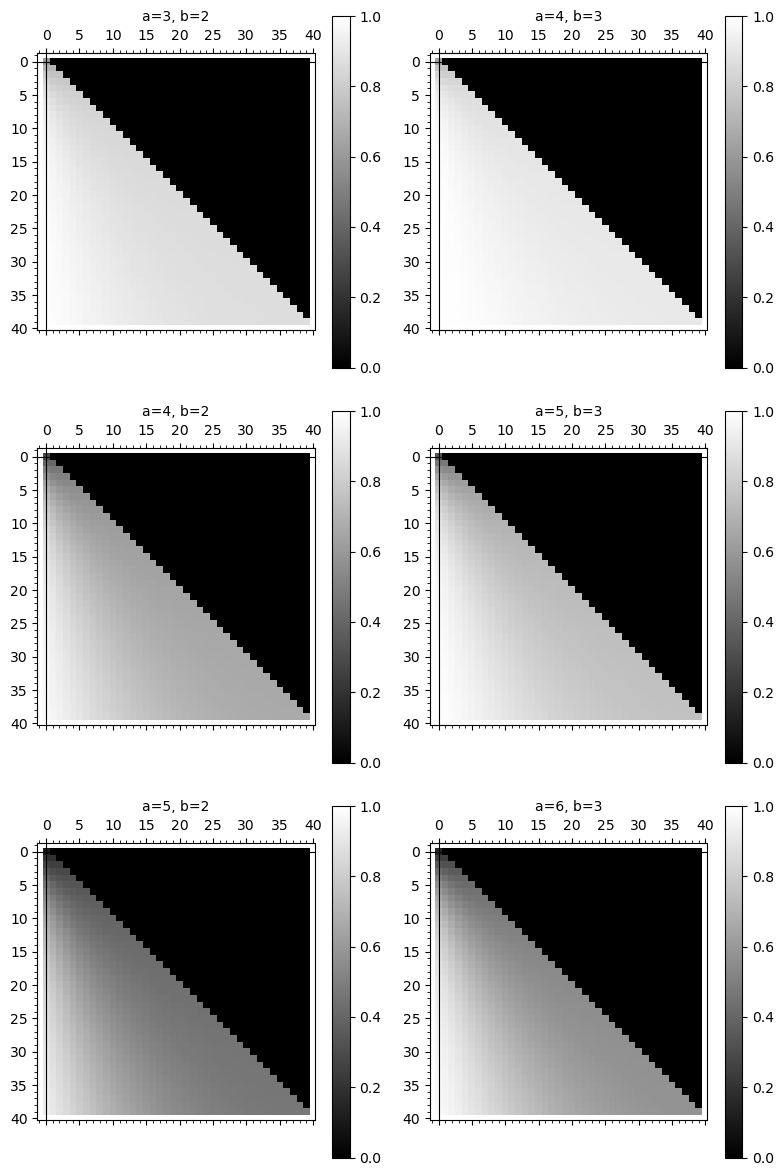}
\caption{\label{fig:Mat-tworows-c1-export}Probabilities for two-row partitions,  \(2 \le b \le 3\), \(b+1 \le a \le b+3\)}
\end{figure}
\end{center}
\subsubsection{Limit probabilities}
\label{sec:orgb090e0d}
We tabulate, for small values of \(a\) and \(b\), the limit probabilities predicted by Theorem \ref{thm-tworows-prob}, namely 
\[
\lim_{(\lambda_1,\lambda_2) \to (\infty,\infty)}\Prob(P_{(\lambda_{1},\lambda_{2})}; (1,a) < (2,b)),
\]
where \(\lambda_{1}, \lambda_{2}\) tends to infinity with bounded non-negative difference \(\lambda_{1} - \lambda_{2}\). 
\begin{table}[htb]
\begin{center}
\label{tab:limprobsmall}
\caption{Probabilites for $(1,a) < (2,b)$ in infinite Catalan poset}
\begin{tabular}{|r|r|r|r|r|r|r|r|r|r|} \hline
 & b=1 & b=2 & b=3 & b=4 & b=5 & b=6 & b=7 & b=8 & b=9 \\ \hline \hline
a=2 & $\frac{3}{4}$ & $0$ & $0$ & $0$ & $0$ & $0$ & $0$ & $0$ & $0$ \\ \hline
a=3 & $\frac{1}{2}$ & $\frac{7}{8}$ & $0$ & $0$ & $0$ & $0$ & $0$ & $0$ & $0$ \\ \hline
a=4 & $\frac{5}{16}$ & $\frac{11}{16}$ & $\frac{59}{64}$ & $0$ & $0$ & $0$ & $0$ & $0$ & $0$ \\ \hline
a=5 & $\frac{3}{16}$ & $\frac{1}{2}$ & $\frac{25}{32}$ & $\frac{121}{128}$ & $0$ & $0$ & $0$ & $0$ & $0$ \\ \hline
a=6 & $\frac{7}{64}$ & $\frac{11}{32}$ & $\frac{79}{128}$ & $\frac{107}{128}$ & $\frac{491}{512}$ & $0$ & $0$ & $0$ & $0$ \\ \hline
a=7 & $\frac{1}{16}$ & $\frac{29}{128}$ & $\frac{59}{128}$ & $\frac{89}{128}$ & $\frac{223}{256}$ & $\frac{991}{1024}$ & $0$ & $0$ & $0$ \\ \hline
a=8 & $\frac{9}{256}$ & $\frac{37}{256}$ & $\frac{337}{1024}$ & $\frac{281}{512}$ & $\frac{3073}{4096}$ & $\frac{3667}{4096}$ & $\frac{15955}{16384}$ & $0$ & $0$ \\ \hline
a=9 & $\frac{5}{256}$ & $\frac{23}{256}$ & $\frac{29}{128}$ & $\frac{849}{2048}$ & $\frac{2523}{4096}$ & $\frac{1619}{2048}$ & $\frac{7477}{8192}$ & $\frac{32053}{32768}$ & $0$ \\ \hline
a=10 & $\frac{11}{1024}$ & $\frac{7}{128}$ & $\frac{155}{1024}$ & $\frac{309}{1024}$ & $\frac{7947}{16384}$ & $\frac{5475}{8192}$ & $\frac{26905}{32768}$ & $\frac{30337}{32768}$ & $\frac{128641}{131072}$ \\ \hline
\end{tabular}
\end{center}
\end{table}

Pictorially, the limit probabilities are shown in Figure \ref{fig-Mat-limprob-export}.

\begin{figure}[htbp]
\centering
\includegraphics[width=0.7\textwidth]{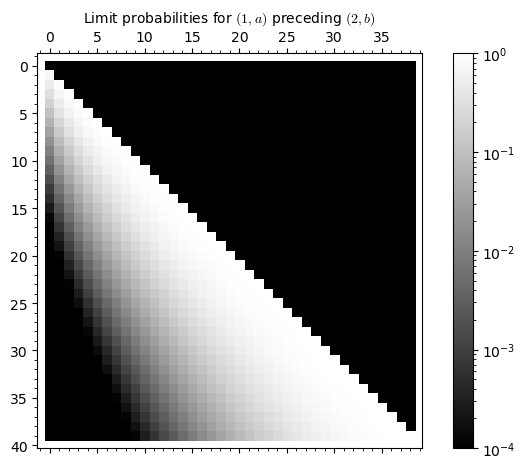}
\caption{\label{fig-Mat-limprob-export}Probability that \((1,a)\) precedes \((2,b)\) in an infinite two-row SYT}
\end{figure}

\begin{remark}
Similar to our discussion about the \(\alpha=(1,a)\), \(\beta=(2,1)\) case,
one could try to determine the limits of
\(\Prob_{\lambda}(\alpha < \beta)\) as \(\lambda = (tp,tq)\) with \(p \ge q \ge 1\) coprime integers,
\(t \to \infty\).
\end{remark}
\section{Concluding remarks}
\label{sec:orgef37190}
Partition posets constitute a class of posets for which the problem of calculating
exact poset probabilities is tractable. We have done so for two-row partitions, and the methods presented here should be suitable for partitions with three or more rows.
The number of cases involving the placement of \(\alpha,\beta\) in different rows will increase,
as will the length of the associated blocking expansions, but this seems to be
very surmountable obstacles.

One could also extend the blocking-partitions method to deal with the probability
\[
\Prob(P_{\lambda}; \alpha_1 < \alpha_2 \cdots < \alpha_k),
\]
where \(\set{\alpha_{1}, \alpha_2, \dots, \alpha_k} \subset P_\lambda\)
is a \(k\)-element antichain. In particular, the case where \(k\) is also
the number of rows of \(\lambda\) would be interesting.

We have calculated limit probabilities
\[
\lim_{(\lambda_{1},\lambda_2) \to (\infty, \infty)}\Prob(P_{(\lambda_{1},\lambda_2)}; \alpha < \beta)\]
for fixed \(\alpha, \beta\) and \(\lambda = \lambda(t) = (t+r, r)\), \(t \to \infty\), \(r\) fixed.
As the examples \ref{example-a2} and \ref{ex-skewlimit} show, letting \(\lambda\) tend to infinity
in other ways gives a different limit. We have chosen the limit with bounded difference
because it is the most natural, and the easiest to compute.
However, for the suggested generalisation
\[
\Prob(P_{(\lambda_1, \dots, \lambda_k)}; \alpha_1 < \alpha_2 \cdots < \alpha_k),
\]
other limits may be more natural and tractable.

\printbibliography
\end{document}